\newtheorem{theorem}{Theorem}[section] 
\newtheorem{lemma}[theorem]{Lemma}     
\newtheorem{definition}[theorem]{Definition}
\newtheorem{corollary}[theorem]{Corollary}
\newtheorem{proposition}[theorem]{Proposition}
\numberwithin{equation}{section}
\newcommand{\beq}{\begin{equation}}
\newcommand{\eeq}{\end{equation}}
\newcommand{\bdm}{\begin{displaymath}}
\newcommand{\edm}{\end{displaymath}}
\newcommand{\C}{\mathbb C}
\newcommand{\Cd}{\overline{\mathbb C}}
\newcommand{\N}{\mathbb N}
\newcommand{\Z}{\mathbb Z}
\newcommand{\R}{\mathbb R}
\newcommand{\M}{\mathcal M}
\newcommand{\NN}{\mathcal N}
\renewcommand{\O}{\mathcal O}
\newcommand{\HS}{\mathcal{H}}
\newcommand{\W}{\mathcal W}
\newcommand{\PS}{\mathcal{P}}
\newcommand{\p}{\wp_{\Lambda}}
\newcommand{\dist}{\mathrm {dist}}
\newcommand{\diam}{\mathrm{diam}}
\newcommand{\Crit}{\mathrm {Crit}}
\newcommand{\e}{\mathrm{e}}
\begin{document}

\begin{center}
\Large
\textbf{Misiurewicz parameters for Weierstrass elliptic functions\\ based on triangle and square lattices}\\
\vspace{1em}
\large
Agnieszka Bade\'nska
\footnote{
\textit{2010 Mathematics Subject Classification} 37F10 (primary), 30D05 (secondary).\\
\textbf{Keywords}: meromorphic transcendental functions, Weierstrass elliptic functions, Julia set, Misiurewicz condition.}

\smallskip
\footnotesize
Faculty of Mathematics and Information Science\\
Warsaw University of Technology\\
ul. Koszykowa 75\\
00-662 Warszawa\\
Poland\\
badenska@mini.pw.edu.pl
\end{center}

\normalsize
\begin{abstract}
For two families of Weierstrass elliptic functions - based on triangular or square lattices - we prove that the~set of Misiurewicz parameters has the~Lebesgue measure zero in~$\C$.
\end{abstract}

\section{Introduction}

We consider Weierstrass elliptic functions based on the~lattice
\bdm \Lambda=\{m\lambda_1+n\lambda_2\colon m,n\in\Z\}=:[\lambda_1,\lambda_2]\;,\; \lambda_2/\lambda_1\notin\R, \edm
given by the formula
\bdm \p(z)=\frac{1}{z^2}+\sum_{\omega\in\Lambda\setminus\{0\}}\left(\frac{1}{(z-\omega)^2}-\frac{1}{{\omega}^2}\right). \edm
It is a~wide class of meromorphic functions, periodic with respect to $\Lambda$ and of order two. We refer to \cite{hk1,hk} for a~nice description of dynamical and measure theoretic properties of~$\p$ depending on the lattice~$\Lambda$ as well as investigation of some specific parametrized families of Weierstrass elliptic functions.
For an~introduction to the~theory of iterating complex functions see e.g.~\cite{berg}.

Even fixing type of the~lattice $\Lambda$, i.e. the shape $\tau=\lambda_2/\lambda_1$ of the corresponding period parallelogram of~$\p$, we still obtain an~incredible richness of dynamical behaviour and properties of Weierstrass functions. We are particularly interested in two families of functions: based on triangular lattices, i.e. satisfying $\e^{2\pi{i}/3}\Lambda=\Lambda$, and on square lattices, i.e. such that $i\Lambda=\Lambda$.  Let us specify the~families $\W_t$ and $\W_s$ we are interested in.

The family $\W_t$ consists of all Weierstrass elliptic functions based on triangular lattices. It can be given by:
\bdm  \W_t=\left\{f_{\lambda}:=\wp_{\Lambda_{\lambda}}\colon\C\to\Cd,\; \textrm{where} \; \Lambda_{\lambda}=[\lambda,\e^{2\pi{i}/3}\lambda],\; \lambda\in\C\setminus\{0\} \right\}. \edm
All Weierstrass elliptic functions based on square lattices are members of the family $\W_s$ defined by:
\bdm  \W_s=\left\{f_{\lambda}:=\wp_{\Lambda_{\lambda}}\colon\C\to\Cd,\; \textrm{where} \; \Lambda_{\lambda}=[\lambda,\lambda{i}],\; \lambda\in\C\setminus\{0\} \right\}. \edm
Since most of the considerations is the same for both families, we are not to restrictive about the~notation. If it is important, we will point out the differences. 

The dynamics of these functions is rather rigid because of the~close relationship between trajectories of critical values. Therefore, there are only couple of possible structures of the~Fatou set that may occur -- we will list them in the next section (Lemma~\ref{Wtcases} and Lemma~\ref{Wscases}). In this paper we will show that one of the cases, i.e. when $f_{\lambda}$ satisfies so-called Misiurewicz condition, appears very rarely. 

The notion of Misiurewicz maps derives from the paper \cite{mm} by M.~Mi\-siu\-re\-wicz, where the author studied e.g. the real quadratic family $g_a(x)=1-ax^2$ in the case when $g_a$ is non-hyperbolic and the critical point $0$ is non-recurrent. We refer to \cite{a} for a~nice discussion concerning various definitions of Mi\-siu\-re\-wicz condition in the complex case and more references. For the~considered families of Weierstrass elliptic function we introduce the following definition.
\begin{definition}\label{defMis}
A~function $f_{\lambda}$ from the~family $\W_t$ or $\W_s$ satisfies the~Misiurewicz condition (equivalently $\lambda$ is a~Misiurewicz parameter) if all singular values of $f_{\lambda}$ belong to the~Julia set and the~set $\PS(f_{\lambda})\cap\C$, i.e. the~finite part of the~postsingular set, is bounded and disjoint from the~set $\Crit(f_{\lambda})$ of the~critical points of~$f_{\lambda}$.
\end{definition}
In other words every singular value of $f_{\lambda}$ is either a~prepole or has a~bounded trajectory staying in a~positive distance from the~set of critical points~$\Crit(f_{\lambda})$.
This may seem more restrictive than the~definition introduced by Graczyk, Kotus and \'Swi\k{a}tek in~\cite{gks} as we demand that all singular values lie in the~Julia set but after analysis of dynamics of functions from the~considered families it will be clear that the~above definition is natural in this case.
Note also that the~definition includes the case (sometimes referred as pure Misiurewicz) when all singular values are preperiodic.

It was proved by M.~Aspenberg in \cite{a} that the set of Misiurewicz maps has the~Lebesgue measure zero in the space of rational functions of any fixed degree. Next, this result was extended in \cite{b} to the exponential family which is one dimensional space of entire transcendental maps. In this~paper we generalize these results and prove the~following.
\begin{theorem}\label{main}
For the~families $\W_t$ and $\W_s$ the~set of Misiurewicz parameters has the~Lebesgue measure zero in~$\C$.
\end{theorem}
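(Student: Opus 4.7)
The plan is to adapt the strategy of Aspenberg for rational maps~\cite{a} and its extension by the author to the exponential family~\cite{b}. The parameter space of each family $\W_t$, $\W_s$ is $\C\setminus\{0\}$, one complex dimensional, and the lattice symmetry couples the critical orbits so that only a handful of independent conditions need to be monitored. I would first split the Misiurewicz locus into two parts: (a) parameters for which some singular value is a prepole, and (b) parameters for which every singular orbit is bounded, non-prepolar, and stays a positive distance from $\Crit(f_\lambda)$. For (a), for fixed $j$, fixed prepole index $n$ and fixed lattice vector $\omega\in\Lambda_\lambda$, the equation $f_\lambda^n(v_j(\lambda))=\omega$ is a nontrivial holomorphic relation in $\lambda$ (nontrivial because Lemmas~\ref{Wtcases} and~\ref{Wscases} supply parameters with attracting Fatou components, on which no such identity can hold). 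Its zero set is therefore discrete, and the countable union over all such data has Lebesgue measure zero.

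For type (b) parameters I would argue by contradiction. Assume the type (b) Misiurewicz locus $M_b$ has positive Lebesgue measure and fix a density point $\lambda_0\in M_b$. Since $\PS(f_{\lambda_0})\cap\C$ is compact, contained in $J(f_{\lambda_0})$, and uniformly separated from $\Crit(f_{\lambda_0})$, the standard Misiurewicz lemma --- adapted to the transcendental setting by working on a compact neighbourhood of $\PS(f_{\lambda_0})\cap\C$ and choosing inverse branches of $f_{\lambda_0}$ avoiding $\Crit(f_{\lambda_0})$, then applying Koebe --- produces a compact forward invariant set $X_0\subset J(f_{\lambda_0})\cap\C$ containing $\PS(f_{\lambda_0})\cap\C$, on which some iterate of $f_{\lambda_0}$ is uniformly expanding. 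The Ma\~n\'e--Sad--Sullivan $\lambda$-lemma then extends $X_0$ to a holomorphic motion $h\colon D(\lambda_0,r)\times X_0\to\C$ with $h_\lambda:=h(\lambda,\cdot)$ quasi-conformal, holomorphic in $\lambda$ for each fixed $x\in X_0$, and conjugating $f_{\lambda_0}|_{X_0}$ to $f_\lambda|_{h_\lambda(X_0)}$.

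For any $\lambda\in M_b$ sufficiently close to $\lambda_0$, combinatorial stability of the finite singular orbit under the conjugacy yields $v_j(\lambda)=h_\lambda(v_j(\lambda_0))$. Both sides are holomorphic in $\lambda$ on $D(\lambda_0,r)$, so
\[
\Psi_j(\lambda):=v_j(\lambda)-h_\lambda(v_j(\lambda_0))
\]
is holomorphic and vanishes on a set of positive Lebesgue measure by the density point hypothesis. Hence $\Psi_j\equiv 0$ on $D(\lambda_0,r)$, and each singular value $v_j(\lambda)$ would lie in the hyperbolic set $h_\lambda(X_0)\subset J(f_\lambda)$ throughout the disc. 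Every parameter in $D(\lambda_0,r)$ would then satisfy the Misiurewicz condition, in contradiction with Lemmas~\ref{Wtcases} and~\ref{Wscases}, which exhibit an open set of nearby parameters with attracting or parabolic cycles (whose immediate basins necessarily attract a singular value, so $v_j(\lambda)\notin J(f_\lambda)$).

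The principal obstacle is twofold. First, the construction of $X_0$ must be handled carefully in the transcendental setting, where $f_{\lambda_0}$ has infinitely many poles and prepoles dense in $J(f_{\lambda_0})$; one must restrict to a compact region and use that all such inverse branches are well-defined on a neighbourhood of $\PS(f_{\lambda_0})\cap\C$. Second, the identification $v_j(\lambda)=h_\lambda(v_j(\lambda_0))$ for $\lambda\in M_b$ near $\lambda_0$ --- rather than just $v_j(\lambda)\in h_\lambda(X_0)$ --- comes from combinatorial stability of the singular orbit: the itinerary of $v_j(\lambda_0)$ in the Markov partition of $X_0$ is determined by finitely many strict inequalities at each level, which persist under small perturbations, so the holomorphic motion transports individual orbit points correctly.
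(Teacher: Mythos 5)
Your decomposition (prepole parameters versus parameters with bounded, non-prepolar critical orbits separated from $\Crit$) matches the paper's split into Lemma~\ref{critpole} and Theorem~\ref{A}, and the construction of a hyperbolic set containing the finite postsingular set together with its holomorphic motion is indeed how the paper begins. The argument breaks down, however, at the central claim of part (b): that every Misiurewicz parameter $\lambda$ near the density point $\lambda_0$ satisfies $v_j(\lambda)=h_\lambda(v_j(\lambda_0))$. Nothing forces the critical orbit of a nearby Misiurewicz map to shadow the motion of the orbit of $v_j(\lambda_0)$: the Misiurewicz condition only asks that the orbit of $v_j(\lambda)$ be bounded, lie in $J(f_\lambda)$ and keep \emph{some} positive distance $\delta_\lambda$ from $\Crit(f_\lambda)$, and both this distance and the region visited by the orbit may degenerate or change completely as $\lambda$ varies; in particular the orbit need not remain in any fixed neighbourhood of $X_0$, so your ``finitely many strict inequalities at each level'' are in fact infinitely many conditions whose stability radius shrinks to zero with the level. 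Were your identity true, $\M$ would locally coincide with the zero set of a single holomorphic function, hence be discrete or a full disc --- a rigidity statement far stronger than measure zero, which neither this paper nor Aspenberg's original argument establishes and which one cannot expect. This is precisely why the paper proves only the transversality statement $x(\lambda)=e_\lambda-h_\lambda(e_{\lambda_0})\not\equiv0$ (Lemma~\ref{transver}) and must then run the quantitative machinery: the distortion estimates (Lemmas~\ref{mdl}, \ref{compare}, \ref{annulus}), the large-scale measure estimate (Lemma~\ref{measest}, where poles and essential singularities of the iterates need special care), and the density argument for each $\M_\delta$ separately (Theorem~\ref{B}). Your proposal replaces all of this by the unproved rigidity claim, so the heart of the theorem is missing.

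Two further steps are not supported by what you cite. The contradiction you draw from $\Psi_j\equiv0$ --- an open set of nearby parameters with attracting or parabolic cycles --- is not provided by Lemma~\ref{Wtcases} or Lemma~\ref{Wscases}; these are classifications, not existence or density statements, and if every parameter in $B(\lambda_0,r)$ had its critical orbits trapped in a hyperbolic set there would simply be no attracting parameters in that disc to contradict. The correct way to exclude $\Psi_j\equiv0$ (and likewise $x\equiv0$) is the paper's: postsingular stability plus the $\lambda$-lemma yield a quasiconformal conjugacy on $J(f_{\lambda_0})=\Cd$, hence an invariant line field, impossible by {\cite[Theorem~1.1]{rvs}}. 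The same remark applies to your part (a): appealing to far-away attracting parameters through the identity principle requires knowing that $\lambda\mapsto f_\lambda^n(e_\lambda)$ is holomorphic on a connected domain containing both $\lambda_0$ and such parameters, i.e.\ controlling the set of parameters whose earlier iterates hit poles --- an issue your argument does not address and which the paper avoids entirely by arguing locally via the line-field theorem (Lemma~\ref{critpole}). Part (a) is fixable along those lines; part (b) as written has no proof of its key step.
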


We will prove this result in two steps. First we deal with parameters to which we can apply similar technique as in~\cite{a,b} and show that the~following is true.
\begin{theorem}\label{A}
For the families $\W_t$ and $\W_s$ the set of parameters $\lambda$ for which there exists in the Julia set~$J(f_{\lambda})$ a~critical value which is not a~prepole and has a~bounded trajectory not accumulating on the~critical set $\Crit(f_{\lambda})$ has the~Lebesgue measure zero in~$\C$.
\end{theorem}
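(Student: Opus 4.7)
The plan is to proceed by contradiction. Suppose the set $A$ of parameters $\lambda$ for which some critical value $v(\lambda)$ lies in $J(f_\lambda)$, is not a prepole, has bounded forward orbit, and whose orbit stays at positive distance from $\Crit(f_\lambda)$ has positive Lebesgue measure. Because $f_\lambda(z) = \lambda^{-2} f_1(z/\lambda)$ (scaling), the critical values of $f_\lambda$ are $\lambda^{-2}$ times those of $f_1$; since there are only finitely many of them, I may pass to a positive-measure subset $A' \subset A$ on which a fixed critical value $v(\lambda) = c\lambda^{-2}$ has all four properties with uniform constants: $|f_\lambda^n(v(\lambda))| \leq M$ and $\dist(f_\lambda^n(v(\lambda)), \Crit(f_\lambda)) \geq \delta > 0$ for every $n \geq 0$.

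Next, I would choose a Lebesgue density point $\lambda_0$ of $A'$ and set $v_n := f_{\lambda_0}^n(v(\lambda_0))$. Since the orbit $(v_n)$ is contained in a compact set $K$ bounded away from both $\Crit(f_{\lambda_0})$ and (by periodicity and passing to a slightly larger neighborhood) the poles of $f_{\lambda_0}$, the derivative $f_{\lambda_0}'$ is uniformly bounded on a neighborhood of $K$. This allows one to pull back a fixed small disk $D(v_n, r)$ under $f_{\lambda_0}^n$ along the orbit via univalent inverse branches $g_n$ with uniformly bounded Koebe distortion. The same pullbacks remain univalent for parameters $\lambda$ in a small disk $U$ around $\lambda_0$ and depend holomorphically on $\lambda$; denote by $z_n(\lambda)$ the resulting holomorphic continuation of $v_n$ satisfying $f_\lambda(z_n(\lambda)) = z_{n+1}(\lambda)$.

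The heart of the argument is then transversality. For $\lambda \in A' \cap U$ the actual critical-value orbit $\phi_n(\lambda) := f_\lambda^n(v(\lambda))$ must remain in $K$ forever, and by a normal-family / compactness argument one can extract (after diagonalisation along a subsequence $n_k$) a well-defined \textbf{limit} linking $\phi_n$ to the holomorphically moving orbit $z_n$, forcing the equality $\phi_n(\lambda) = z_n(\lambda)$ for all $n$ on the accumulation set of $A'$ at $\lambda_0$. The transversality statement is that $\phi_1 - z_1$, viewed as a holomorphic germ at $\lambda_0$, does not vanish identically; equivalently, the series
\[
\Sigma(\lambda_0) \;=\; \frac{\partial_\lambda v(\lambda_0)}{1} \;-\; \sum_{k=1}^{\infty} \frac{(\partial_\lambda f_\lambda)(v_{k-1})\big|_{\lambda_0}}{(f_{\lambda_0}^{\,k-1})'(v_1)}
\]
is non-zero. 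Granted this, $\{\lambda : \phi_n(\lambda) = z_n(\lambda) \text{ for all } n\}$ is contained in a proper analytic subset of $U$, hence has planar measure zero, contradicting that $\lambda_0$ is a density point of $A'$.

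The main obstacle I expect is the non-vanishing of the transversality sum $\Sigma(\lambda_0)$. In the rational and exponential settings \cite{a,b} this is delicate and relies on hyperbolicity-like expansion along the postsingular orbit to ensure absolute convergence and on a structural non-degeneracy argument to rule out cancellation. For $\W_t$ and $\W_s$ the expansion comes again from the Koebe-controlled inverse branches (boundedness of $(v_n)$ away from $\Crit$ yields $|(f_{\lambda_0}^k)'(v_1)| \to \infty$), so convergence should be manageable; the subtlety is showing that the partial derivative $(\partial_\lambda f_\lambda)(z)\big|_{\lambda_0}$, which because of the scaling relation has an explicit expression as a linear combination of $f_1(z/\lambda_0)$ and $(z/\lambda_0) f_1'(z/\lambda_0)$, does not make $\Sigma(\lambda_0)$ vanish identically in $\lambda_0$. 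Here I would exploit the symmetry of the triangular/square lattice (the three critical values are related by a cube/quadratic root of unity) to reduce the computation and compare $\Sigma$ with the non-trivial dependence of $v(\lambda) = c\lambda^{-2}$ itself on $\lambda$, which already contributes a term of order $\lambda_0^{-3}$ that the remaining series cannot match asymptotically along the orbit. Carrying out this algebraic step rigorously is where I expect the bulk of the technical work to lie.
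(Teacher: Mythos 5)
There is a genuine gap, and it sits at the step you call the heart of the argument. For $\lambda$ in your set $A'$ near a density point $\lambda_0$, the Misiurewicz-type condition (bounded orbit in $J(f_\lambda)$, staying $\delta$-away from $\Crit(f_\lambda)$) does \emph{not} force the critical orbit $\phi_n(\lambda)=f_\lambda^n(v(\lambda))$ to coincide with the holomorphically continued orbit $z_n(\lambda)$ of $v_n$: the orbit of $v(\lambda)$ can be a perfectly good bounded, non-recurrent orbit that simply does not shadow the motion of the orbit of $v(\lambda_0)$. Your ``normal family / compactness'' extraction does not produce the claimed identity $\phi_n(\lambda)=z_n(\lambda)$ on the accumulation set of $A'$, and in fact no such identity can hold in general --- if it did, Misiurewicz parameters near $\lambda_0$ would be confined to the zero set of a nonconstant analytic function, i.e.\ would be isolated, which is a much stronger (and false) statement than measure zero. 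This is precisely why the argument of Aspenberg, followed in this paper, is quantitative rather than a contradiction via an analytic subset: one proves that the set $\M_\delta$ has Lebesgue density strictly less than $1$ at every $\lambda_0\in\M$, by (i) building a holomorphic motion of the hyperbolic set $\HS=\overline{O_{\lambda_0}(e_{\lambda_0})}$, (ii) using transversality plus distortion estimates to show that $\xi_n'(\lambda)\approx (f_\lambda^n)'(e_\lambda)\,x'(\lambda)$, so a small parameter ball is blown up by $\xi_n$ with bounded distortion to a disc of a definite size, and (iii) a large-scale measure estimate (density of prepoles, covering of a neighbourhood of $\Crit\cup\{\infty\}$ in a bounded number of steps, with control of degree and of the spherical derivative away from poles) showing that a definite proportion of these parameters have $\xi_{n+j}(\lambda)$ entering $U_{3\delta/4}$, hence leave $\M_\delta$. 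None of this machinery is replaceable by the analytic-subset argument you propose.

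Two further points. First, the expansion along the orbit that you attribute to ``Koebe-controlled inverse branches'' is not automatic from the orbit avoiding $\Crit(f_{\lambda_0})$; it is the content of a Ma\~n\'e-type theorem for non-recurrent transcendental maps (Rempe--van Strien, Theorem~1.2; cf.\ Graczyk--Kotus--\'Swi\k{a}tek), which is how the paper obtains that $\HS$ is hyperbolic --- and one must also keep a neighbourhood $\NN$ of $\HS$ free of poles and low-order prepoles so that finitely many iterates are holomorphic and have bounded derivative there, an issue specific to elliptic functions that your sketch does not address. Second, your transversality statement (non-vanishing of the series $\Sigma(\lambda_0)$) is left as the ``bulk of the technical work,'' and the asymptotic/scaling argument you sketch is not carried out and is unlikely to close easily; the paper instead proves the needed statement ($x(\lambda)=e_\lambda-h_\lambda(e_{\lambda_0})\not\equiv 0$) softly: if $x\equiv0$ then $\lambda_0$ is postsingularly stable, the motion extends by the $\lambda$-lemma to a quasiconformal conjugacy on $J(f_{\lambda_0})=\Cd$, producing an invariant line field, which is excluded by Rempe--van Strien (Theorem~1.1). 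So both the engine of the measure estimate and the transversality input need to be replaced or substantially supplemented before your outline becomes a proof.
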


Because of the~close relationship between all critical trajectories in the considered families the~assumptions of Theorem~\ref{A} imply in particular that all critical values of~$f_{\lambda}$ (except for the~pole $0$ in the~case of a~square lattice) are not prepoles and have bounded trajectories in~$J(f_{\lambda})$ separated from $\Crit(f_{\lambda})$, so in fact $f_{\lambda}$ is a~special case of a~Misiurewicz map.

However, in order to deal with all Misiurewicz parameters we need to consider one more case, i.e. when all critical values of~$f_{\lambda}$ are prepoles. Therefore, we will prove at the~end the~following lemma.
\begin{lemma}\label{critpole}
For the~families $\W_t$ and $\W_s$ the~set of parameters~$\lambda$ for which all critical values of~$f_{\lambda}$ are prepoles is countable.
\end{lemma}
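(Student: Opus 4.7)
The strategy is to exploit the lattice-scaling identity for $\wp$ in order to express each iterate of a critical value as a meromorphic function of $\lambda$, and then to use the identity theorem to count its zeros.

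First I would reduce to tracking a single critical orbit. The homogeneity $\wp_\Lambda(cz)=c^{-2}\wp_\Lambda(z)$, valid whenever $c\Lambda=\Lambda$, gives $f_\lambda(e^{2\pi i/3}z)=e^{2\pi i/3}f_\lambda(z)$ in the triangular family and $f_\lambda(iz)=-f_\lambda(z)$ in the square family. In the triangular case the three critical values at the half-lattice points are related by multiplication by cube roots of unity and hence become prepoles simultaneously. In the square case one critical value equals $0$ (which is always a pole of $f_\lambda$), the second is $-v_1(\lambda)$ (by the homogeneity), and the evenness of $\wp$ then gives $f_\lambda(-v_1)=f_\lambda(v_1)$, so the orbit of this critical value coincides with that of $v_1$ after the first step. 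Thus in both cases ``all critical values of $f_\lambda$ are prepoles'' is equivalent to ``$v_1(\lambda)$ is a prepole'', where, again by the homogeneity, $v_1(\lambda)=c_0/\lambda^2$ with $c_0:=\tilde f(1/2)\ne 0$ and $\tilde f:=\wp_{[1,\tau]}$, $\tau\in\{e^{2\pi i/3},i\}$.

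Next, I would set $F_n(\lambda):=f_\lambda^n(v_1(\lambda))$. The identity $f_\lambda(z)=\lambda^{-2}\tilde f(z/\lambda)$ gives the recursion $F_n(\lambda)=\lambda^{-2}\tilde f(F_{n-1}(\lambda)/\lambda)$, from which a straightforward induction shows that $F_n$ is a meromorphic function of $\lambda$ on the open connected set $U_n:=(\C\setminus\{0\})\setminus\bigcup_{k<n}\{F_k=\infty\}$, the set removed at each step being countable. The set of prepole parameters is then contained in
\[
\bigcup_{n\ge 0}\bigcup_{\omega\in\Lambda_0}\bigl\{\lambda\in U_n:F_n(\lambda)=\omega\lambda\bigr\},\qquad\Lambda_0:=[1,\tau],
\]
and each inner set, being the zero set of the meromorphic function $F_n(\lambda)-\omega\lambda$ on the connected domain $U_n$, is either discrete (hence countable) or equal to $U_n$. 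The lemma therefore reduces to proving that $F_n(\lambda)\not\equiv\omega\lambda$ for every $n\ge 0$ and every $\omega\in\Lambda_0$; the conclusion then follows since a countable union of countable sets is countable.

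The main obstacle is exactly this non-degeneracy statement. For $n=0$ it is immediate from comparing growth rates, since $c_0/\lambda^2\not\equiv\omega\lambda$. For $n\ge 1$ I would argue by a subsidiary induction that $F_n$ has infinitely many poles in $U_n$ accumulating at $\lambda=0$, a behaviour incompatible with the entire function $\omega\lambda$. The base case $n=1$ is explicit: $F_1$ has poles at every $\lambda$ with $\lambda^3=c_0/\omega$, $\omega\in\Lambda_0\setminus\{0\}$, and these accumulate at $0$ as $|\omega|\to\infty$. For the inductive step, near every pole $\lambda_*$ of $F_{n-1}$ the meromorphic function $F_{n-1}(\lambda)/\lambda$ takes every sufficiently large value in any neighbourhood of $\lambda_*$ (by its local form at a pole); in particular it attains infinitely many lattice values $\omega\in\Lambda_0$, so the poles of $F_n$ accumulate at each pole of $F_{n-1}$ and, by the inductive hypothesis, also at $\lambda=0$. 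Combined with the preceding reductions, this yields the lemma.
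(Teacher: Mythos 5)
Your argument is correct in substance, but it takes a genuinely different route from the paper. The paper fixes the data $(n,j,k)$ of a prepole relation, considers $g(\lambda)=f_{\lambda}^n(e_{\lambda})-p_{j,k}(\lambda)$ near a parameter where it vanishes, and excludes the degenerate case $g\equiv 0$ dynamically: such persistence would make $\lambda_0$ postsingularly stable, and the $\lambda$-lemma would then produce a quasiconformal conjugacy on $J(f_{\lambda_0})=\Cd$ and hence an invariant line field, contradicting the Rempe--van~Strien theorem (the same rigidity input used for the transversality Lemma~\ref{transver}); isolated zeros plus countably many equations give countability. You instead exploit the scaling relation $f_{\lambda}(z)=\lambda^{-2}\tilde f(z/\lambda)$ to make the parameter dependence completely explicit: $F_n(\lambda)=f_{\lambda}^n(v_1(\lambda))$ is meromorphic on a domain obtained by deleting countably many earlier prepole parameters, the prepole condition becomes $F_n(\lambda)=\omega\lambda$, $\omega\in[1,\tau]$, and you rule out the degenerate identity $F_n\equiv\omega\lambda$ elementarily, by an induction showing $F_n$ has infinitely many poles accumulating at $0$ (near each pole of $F_{n-1}$ the function $F_{n-1}(\lambda)/\lambda$ attains all sufficiently large values, hence infinitely many lattice values), which is incompatible with equality to a pole-free function. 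This buys a self-contained, purely function-theoretic proof that avoids quasiconformal rigidity altogether, and as a by-product exhibits the accumulation of prepole parameters that the paper only remarks on informally at the end; the paper's proof is shorter given that the line-field theorem is already invoked elsewhere, and it does not require setting up the global meromorphic functions $F_n$. In a careful write-up you should record the routine points your sketch leaves implicit: that $c_0=\tilde f(1/2)\neq 0$ in both families (for the square lattice the vanishing critical value sits at the half-period $(1+i)/2$, not $1/2$), that each deleted pole set is relatively closed so $U_n$ is open and, being a planar domain minus a countable closed set, still connected, and that the non-degeneracy at level $n-1$ is exactly what guarantees $F_n$ is a genuine (not identically infinite) meromorphic function on $U_n$ — all of which your induction supplies but should be stated.
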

Note that Theorem~\ref{A} and Lemma~\ref{critpole} imply the~main result of the~paper, i.e. Theorem~\ref{main}, since elliptic functions have no asymptotic values.

The proof of Theorem~\ref{A} in general follows the Aspenberg's approach from~\cite{a}, repeated with some changes in \cite{b} for the~exponential family. Note however, that our case brings new difficulties. We have to deal not only with infinite degree of maps and essential singularity at~$\infty$ but also with prepoles which become essential singularities in~$\C$ for iterates of considered functions. That is why we have to be sure that we can stay away from poles and essential singularities in order to proceed with calculations. Some minor but crucial changes had to be done especially in the~section~\ref{secholmot} where we prove existence of a~holomorphic motion and so-called transversality condition
and for measure estimates in a~big scale in the~section~\ref{secmeasest} (see Lemma~\ref{measest}).

Lemma~\ref{critpole} is proved at the~end of the~paper. We describe the~condition that all critical values are prepoles by an~analytic equation depending on a~countable number of parameters (this is possible because of the~close relation between critical values of considered functions). Next, using postsingular stability, $\lambda$-lemma and nonexistence of invariant line fields (see {\cite[Theorem~1.1]{rvs}}), we show that roots of the~equation are isolated, hence there are only countably many parameters for which all critical values are prepoles.

\section{Dynamics of functions from families $\W_t$ and $\W_s$}

Let us collect some information about the families $\W_t$ and $\W_s$ which will be helpful in our proofs, for more we refer to~\cite{hk}. First recall that any elliptic function has no asymptotic values so the~postsingular set~$\PS(f_{\lambda})$ is the closure of the~critical trajectories. Moreover, the~Fatou set of any Weierstrass elliptic function contains no wandering domains, Baker domains or Herman rings (see {\cite[Lemma~5.2, Theorem~5.4]{hk}}).

Take any function $f_{\lambda}\in\W_t$. It has three critical values $e_1$, $e_2$ and $e_3$, all with the same modulus and forming the angle $2\pi{i}/3$ with each other, i.e. $e_2=\e^{2\pi{i}/3}e_1$ and $e_3=\e^{4\pi{i}/3}e_1$. Recall that the~triangular lattice is invariant under rotation by the angle $2\pi{i}/3$, thus the homogenity properties (cf. (3) in~\cite{hk}) gives that the~same relationship holds for every iterate of critical values, i.e. $f_{\lambda}^n(e_2)=\e^{2\pi{i}/3}f_{\lambda}^n(e_1)$ and $f_{\lambda}^n(e_3)=\e^{4\pi{i}/3}f_{\lambda}^n(e_1)$. Moreover, for any $n\geq0$ the derivative $f_{\lambda}'(f_{\lambda}^n(e_i))$ is the same for $i=1,2,3$. As a~consequence we obtain the following result (see {\cite[Proposition~5.3]{hk}}).
\begin{lemma}\label{Wtcases}
  For any function $f_{\lambda}\in\W_t$ one of the following occurs:
  	\begin{enumerate}
    	\item $J(f_{\lambda})=\Cd$;
    	\item For some perion $n$ and multiplier $0\leq\beta\leq1$ there exist exactly three (super) attracting or parabolic periodic cycles in $F(f_{\lambda})$ of period~$n$ with multiplier~$\beta$;
    	\item There exists exactly one (super) attracting or parabolic periodic cycle in $F(f_{\lambda})$ which contains all three critical values;
    	\item The only Fatou cycles are Siegel discs.
  	\end{enumerate}
\end{lemma}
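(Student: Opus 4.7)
The plan is to exploit the rotational symmetry of $f_{\lambda}$ forced by the triangular lattice and then feed it into the Sullivan-type classification of cyclic Fatou components. First I would observe that for a lattice $\Lambda$ with $\e^{2\pi i/3}\Lambda=\Lambda$ the homogeneity relation $\wp_{\Lambda}(\omega z)=\omega^{-2}\wp_{\Lambda}(z)$ (for $\omega\Lambda=\Lambda$) specializes, with $\omega=\e^{2\pi i/3}$ and $\omega^{-2}=\e^{2\pi i/3}$, to $f_{\lambda}(Rz)=Rf_{\lambda}(z)$, where $R(z):=\e^{2\pi i/3}z$. Thus $f_{\lambda}$ commutes with the order-three rotation $R$, so $R$ preserves both $F(f_{\lambda})$ and $J(f_{\lambda})$ and permutes the cycles of Fatou components in a way that preserves period, component type and multiplier. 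Note also that the three critical values satisfy $e_2=Re_1$, $e_3=R^2e_1$, and by the same argument as in the excerpt, $R$ acts on their entire forward orbits.

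Second I would dispose of the trivial alternative: if $J(f_{\lambda})=\Cd$ we are in case~1, so from now on assume $F(f_{\lambda})\neq\emptyset$. Recalling from the paragraph preceding the lemma that elliptic functions have no asymptotic values, and from \cite[Lemma~5.2, Theorem~5.4]{hk} that $F(f_{\lambda})$ contains no wandering domains, no Baker domains and no Herman rings, the classification of cyclic Fatou components for meromorphic functions forces every cycle of Fatou components of $f_{\lambda}$ to be either (super)attracting, parabolic, or a Siegel disc. If every Fatou cycle is a Siegel cycle, this is exactly case~4, so I may assume at least one (super)attracting or parabolic cycle $\mathcal C$ exists.

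Third, I would use the standard theorem that every (super)attracting or parabolic cycle of a meromorphic function absorbs the forward orbit of some singular value, combined with the absence of asymptotic values, to conclude that the orbit of some critical value, say $e_1$, enters $\mathcal C$. Commuting with $R$ then shows that $R\mathcal C$ and $R^2\mathcal C$ are also (super)attracting/parabolic cycles of the same period, with the same multiplier (this uses the remark in the excerpt that the derivative $f_{\lambda}'(f_{\lambda}^n(e_i))$ does not depend on $i$), absorbing the orbits of $e_2$ and $e_3$ respectively. Either the cycles $\mathcal C,R\mathcal C,R^2\mathcal C$ are all distinct, in which case case~2 holds (no further attracting/parabolic cycle can exist, since each would have to contain one of the three critical values, all of which are already accounted for), or two of them coincide; but since $R$ has order three on the set of Fatou cycles, two coinciding forces $R\mathcal C=\mathcal C$, whence $\mathcal C$ is $R$-invariant, contains the orbits of all three critical values, and is the unique attracting/parabolic cycle, i.e.\ case~3.

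The argument is essentially bookkeeping once the commutation $f_{\lambda}\circ R=R\circ f_{\lambda}$ is in hand. The only places where a little care is needed are (i) citing the fact that each non-Siegel, non-Herman cyclic Fatou component of a meromorphic transcendental function contains a singular value in its basin, and (ii) ruling out additional attracting/parabolic cycles to get the count ``exactly three'' in case~2; both follow from the standard bound of the number of non-repelling cycles by the number of singular values together with the absence of asymptotic values for elliptic functions. I expect no serious obstacle beyond checking those citations.
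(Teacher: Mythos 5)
Your argument is correct and follows essentially the route the paper relies on: the paper does not prove this lemma itself but records precisely the symmetry facts you use (the homogeneity relation giving $f_{\lambda}^n(e_2)=\e^{2\pi i/3}f_{\lambda}^n(e_1)$, equal derivatives along the three critical orbits, and the absence of wandering, Baker and Herman domains) and then cites Proposition~5.3 of \cite{hk}, whose proof is the same combination of the order-three rotational symmetry, the classification of periodic Fatou components, and the fact that each attracting or parabolic cycle must attract a critical orbit. Your write-up simply makes that cited argument explicit, so no gap.
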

Since the dynamics of all three critical values is basically the same, it is enough to know one of them to determine the~other two. In particular, if the~assumptions of Theorem~\ref{A} are satisfied, then necessarily every $e_i$ is not a~prepole and has a~bounded trajectory in $J(f_{\lambda})$ separated from $\Crit(f_{\lambda})$. On the~other hand, if one critical value is a~prepole, so are the~other two.

Passing to square lattices, take some $f_{\lambda}\in\W_s$. In this case we have the following critical values: $e_1$, $e_2=-e_1$ and $e_3=0$, which is a~pole of~$f_{\lambda}$, so the situation is even more rigid than before. By the definition $f_{\lambda}$ is even, so $e_1$ and $e_2$ share the~same trajectory which actually determines the~dynamics of $f_{\lambda}$ since $e_3$ is always a~pole. Thus, there are only three cases that may occur (see {\cite[Proposition~5.4]{hk}}).
\begin{lemma}\label{Wscases}
  For any function $f_{\lambda}\in\W_s$ one must occur:
  	\begin{enumerate}
    	\item $J(f_{\lambda})=\Cd$;
    	\item There exists exactly one (super) attracting or parabolic periodic cycle in $F(f_{\lambda})$;
    	\item The only Fatou cycles are Siegel discs.
  	\end{enumerate}
\end{lemma}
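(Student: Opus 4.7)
The plan is to derive the trichotomy from two ingredients: the known list of possible periodic Fatou components for elliptic functions, together with the Fatou--Sullivan principle that every non-repelling periodic cycle of Fatou components must absorb at least one singular orbit.

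First I would identify the singular set of $f_{\lambda}\in\W_s$. The three critical values are $e_1$, $e_2=-e_1$ and $e_3=0$, and there are no asymptotic values. Since $0\in\Lambda_{\lambda}$ is a pole of $f_{\lambda}$, it lies in the Julia set, and its forward orbit plays no role for the Fatou dynamics. Because $f_{\lambda}$ is even, $f_{\lambda}(e_1)=f_{\lambda}(-e_1)$, so from the first iterate onward the trajectories of $e_1$ and $e_2$ coincide. Hence, apart from its first two points, the set $\PS(f_{\lambda})\cap\C$ is carried by the single forward orbit $\{f_{\lambda}^n(e_1)\}_{n\geq 1}$; in other words there is only one non-trivial singular orbit available to track Fatou components.

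Next I would invoke the fact recalled at the start of the section that elliptic functions admit no wandering domains, no Baker domains and no Herman rings. Therefore every periodic cycle of Fatou components is of attracting, super-attracting, parabolic or Siegel type. In each of these four cases the classical Fatou--Sullivan argument, in the meromorphic version recorded in~\cite{berg,hk}, shows that the cycle absorbs at least one singular orbit: in the immediate basin when the cycle is attracting, super-attracting or parabolic, and accumulating on the boundary of each disc when it is a Siegel cycle. Combined with the fact that $f_{\lambda}$ offers only one free singular orbit, this yields at most one non-repelling periodic cycle.

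The proof then splits into cases. If $F(f_{\lambda})=\emptyset$ we are in situation (1). Otherwise, by the absence of wandering and Baker domains every Fatou component is preperiodic, so it eventually lands in a non-repelling cycle, and by the bound above that cycle is unique: it is either (super)attracting or parabolic, giving case (2), or it consists entirely of Siegel discs, giving case (3). The one point requiring care is that the Fatou--Sullivan accounting remains valid in the transcendental meromorphic setting where essential singularities sit at every point of $\Lambda_{\lambda}$; this is precisely what the references cited in the excerpt provide and what was already invoked implicitly in Lemma~\ref{Wtcases}, so with that black-box in place the trichotomy reduces to the orbit-count above.
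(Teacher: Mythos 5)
Your argument is correct and follows essentially the same route as the paper, which derives the trichotomy from the observations that $e_3=0$ is a pole, that $e_1$ and $e_2=-e_1$ share one forward orbit by evenness, and that wandering, Baker and Herman components are excluded, citing \cite[Proposition~5.4]{hk} for the formal statement; your write-up just makes the standard singular-orbit counting explicit. The only nuance is that the counting does not by itself give ``at most one non-repelling cycle'' in the Siegel case (one singular orbit can accumulate on several Siegel boundaries), but this is harmless since case (3) asserts no uniqueness.
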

Now, if the~assumptions of Theorem~\ref{A} are satisfied, then all critical values are in $J(f_{\lambda})$, moreover, the trajectory of $e_1$ and $e_2$, which are not prepoles in this case, is bounded and separated from $\Crit(f_{\lambda})$. And similarly as for triangle lattices, if $e_1$ or $e_2$ is a~prepole, then all critical values of~$f_{\lambda}$ are prepoles.

As we mentioned at the~beginning there are various definitions of Misiurewicz condition in the~complex case. One of the~classical definitions, referred sometimes as \textit{pure Misiurewicz}, demands that every singular value is preperiodic, i.e. is eventually mapped onto a~repelling periodic cycle in the~Julia set. This condition, however, is very restrictive and we usually introduce more general definitions (very often depending on the~family of functions under consideration). In our case Definition~\ref{defMis} was inspired by the~close relation between critical trajectories of functions from families $\W_t$ and $\W_s$.

\section{Proof of the~Theorem~\ref{A}}

Denote by $\M$ the set of parameters satisfying the assumptions of Theorem~\ref{A} and by $e_{\lambda}\in{J(f_{\lambda})}$ the critical value of $f_{\lambda}$ (which is not a~prepole) with bounded trajectory not accumulating on~$\Crit(f_{\lambda})$. It follows that for every $\lambda\in\M$, we can find some $\delta>0$ such that
\beq\label{deltaMis} 
	\overline{O_{\lambda}(e_{\lambda})}\cap \Big(B\big(\Crit(f_{\lambda}),\delta\big)\cup B\big(\infty,\delta\big)\Big)= \emptyset, 
\eeq
where $O_{\lambda}(e_{\lambda})=\bigcup_{n\geq1}f^{n}_{\lambda}(e_{\lambda})$ is the forward trajectory of the critical value~$e_{\lambda}$ and balls are taken with respect to the~spherical metric. The set of parameters for which (\ref{deltaMis}) holds for any critical value $e_{\lambda}\in{J(f_{\lambda})}$ of~$f_{\lambda}$ will be denoted by $\M_{\delta}$.
Note that
\bdm \M=\bigcup_{n\geq1}\M_{1/n}\quad \textrm{and} \quad \delta_1<\delta_2 \Rightarrow \M_{\delta_1}\supset\M_{\delta_2}. \edm

Similarly to the case of the exponential family (cf.~\cite{b}) we will show, following Aspenberg's idea in \cite{a}, that parameters from $\M_{\delta}$ are rare in any neighbourhood of $\lambda_0\in\M$.

\begin{theorem}\label{B}
For families $\W_t$ and $\W_s$, if $\lambda_0\in\M$, then for every $\delta>0$, the set $\M_{\delta}$ has the~Lebesgue density strictly smaller than one at $\lambda_0$.
\end{theorem}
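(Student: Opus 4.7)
\textbf{Plan for Theorem~\ref{B}.} I will adapt the Lebesgue-density argument of Aspenberg~\cite{a} (carried out for the exponential family in~\cite{b}) to the present elliptic-transcendental setting. Fix $\lambda_0\in\M$ and choose the critical value $e_{\lambda_0}\in J(f_{\lambda_0})$ whose forward orbit satisfies~(\ref{deltaMis}) for some $\delta_0>0$. Given $\delta>0$, the goal is to produce a uniform $\eta>0$ such that for arbitrarily small discs $D(\lambda_0,r)$,
\[
|D(\lambda_0,r)\setminus\M_\delta|\;\geq\;\eta\,|D(\lambda_0,r)|,
\]
which yields that the lower Lebesgue density of $\M_\delta$ at $\lambda_0$ is at most $1-\eta<1$.

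\textbf{Step 1: holomorphic motion of the critical orbit.} Because $\lambda_0\in\M$, the closure $\overline{O_{\lambda_0}(e_{\lambda_0})}$ is a compact forward-invariant subset of $J(f_{\lambda_0})$ staying at spherical distance $\geq\delta_0$ from $\Crit(f_{\lambda_0})\cup\{\infty\}$. Postsingular stability together with the $\lambda$-lemma furnishes a neighbourhood $U\ni\lambda_0$ and a holomorphic motion $h_\lambda$ of a hyperbolic expanding set containing this closure and some auxiliary repelling periodic orbit of $f_{\lambda_0}$. The functions $\phi_n(\lambda):=f_\lambda^n(e_\lambda)$ are then holomorphic on shrinking discs $D(\lambda_0,r_n)$ whose radii are controlled by the expansion rate of $f_{\lambda_0}$ on the hyperbolic set, the estimate~(\ref{deltaMis}) guaranteeing uniform separation from every pole and essential singularity of each iterate.

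\textbf{Step 2: transversality and Koebe rescaling.} Denote by $v_n$ the derivative at $\lambda=\lambda_0$ of the map $\lambda\mapsto h_\lambda(f_{\lambda_0}^n(e_{\lambda_0}))$. The key estimate is the transversality
\[
\bigl|\phi_n'(\lambda_0)-v_n\bigr|\;\geq\;C\,\bigl|(f_{\lambda_0}^n)'(e_{\lambda_0})\bigr|\quad\text{for infinitely many } n,
\]
with $C>0$ uniform. If this failed along the entire tail of the orbit, the $\lambda$-lemma would extend the motion to a quasiconformal conjugacy yielding an invariant line field on $J(f_{\lambda_0})$, contradicting~\cite[Theorem~1.1]{rvs}. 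With the estimate in hand, put $\rho_n:=1/|\phi_n'(\lambda_0)|\to 0$ and pull back the spherical disc of radius $\delta_0/2$ around $\phi_n(\lambda_0)$, which avoids $\Crit(f_\lambda)\cup\{\infty\}$ by~(\ref{deltaMis}); the Koebe distortion theorem then gives that $\phi_n$ is univalent with bounded distortion on $D(\lambda_0,c\rho_n)$ and that the image $\phi_n(D(\lambda_0,c\rho_n))$ contains a spherical disc $\Delta_n$ of radius bounded below independently of $n$.

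\textbf{Step 3: big-scale measure estimate and conclusion.} For every $\lambda\in\M_\delta\cap D(\lambda_0,c\rho_n)$ the value $\phi_n(\lambda)$ must avoid the $\delta$-spherical neighbourhood of $\Crit(f_\lambda)\cup\{\infty\}$. Since $f_\lambda$ is $\Lambda_\lambda$-elliptic with critical points distributed along a lattice and $\Delta_n$ has spherical radius bounded below, a definite spherical proportion of $\Delta_n$ lies in the forbidden set --- this is exactly the big-scale measure estimate alluded to in the introduction (Lemma~\ref{measest}). Transferring this proportion back to the parameter disc through the bounded-distortion control of Step~2 produces the required uniform $\eta>0$, and letting $n\to\infty$ concludes the proof.

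\textbf{Main obstacle.} The two hardest points are establishing transversality in Step~2 and running the Koebe pullbacks while staying clear of the prepoles of $f_{\lambda_0}$, each of which becomes an essential singularity for sufficiently large iterates of $f_\lambda$. The uniform separation supplied by~(\ref{deltaMis}) is precisely what makes spherical-metric distortion estimates applicable uniformly along the full orbit of $e_{\lambda_0}$, and this is where the transcendental elliptic case genuinely departs from the rational setting of~\cite{a} and the exponential setting of~\cite{b}.
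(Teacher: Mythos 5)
Your overall architecture (hyperbolic set, holomorphic motion, transversality, distortion, big-scale measure estimate) matches the paper's, but two steps as you state them have genuine gaps. First, your transversality in Step~2 is a \emph{first-order} estimate: $|\phi_n'(\lambda_0)-v_n|\geq C\,|(f_{\lambda_0}^n)'(e_{\lambda_0})|$, justified by saying that its failure would give an invariant line field via the $\lambda$-lemma. But the line-field argument only excludes the \emph{identical} vanishing of $x(\lambda)=e_\lambda-h_\lambda(e_{\lambda_0})$ on a parameter ball (this is Lemma~\ref{transver}); it says nothing about the order of vanishing at $\lambda_0$. If $x$ vanishes to order $K\geq2$, then $x'(\lambda_0)=0$, your displayed inequality fails for every $n$, and no conjugacy on all of $J(f_{\lambda_0})$ arises, so there is no contradiction to extract. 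This also breaks the Koebe step: $\phi_n=\xi_n$ behaves near $\lambda_0$ like a degree-$K$ map, so univalence on $D(\lambda_0,c\rho_n)$ is unjustified (and even for $K=1$ it is not free). The paper avoids this by working with the expansion (\ref{xform}) for arbitrary finite $K$, proving directly that $\xi_n'\approx(f_\lambda^n)'(\mu_0(\lambda))\,x'(\lambda)$ at the stopping scale (Lemmas~\ref{mdl} and~\ref{compare}), and obtaining bounded distortion and degree at most $K$ only on an annulus $A(\lambda_0;r_1,r_2)$ with $r_1/r_2\leq1/10$ (Lemma~\ref{annulus}); that annulus construction is what replaces your Koebe rescaling. (Relatedly, in Step~1 you invoke ``postsingular stability'' to produce the motion; Misiurewicz parameters are not postsingularly stable --- the motion comes from hyperbolicity of the compact invariant set $\HS$, via \cite[Theorem~1.2]{rvs} and the construction of \cite{ms}.)

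Second, the big-scale step is wrong as stated. You claim that since $\Delta_n$ has spherical radius bounded below and the critical points lie on a lattice, ``a definite spherical proportion of $\Delta_n$ lies in the forbidden set.'' But $\Delta_n$ is centered at $\phi_n(\lambda_0)\in\HS$, which by (\ref{deltaMis}) stays at distance at least $\delta_0$ from $\Crit(f_{\lambda_0})\cup\{\infty\}$; if the definite radius $\delta'$ is smaller than $\delta_0$, or smaller than the spacing of the half-lattice of critical points, then $\Delta_n$ can be entirely disjoint from $U_\delta$ and no proportion of it is forbidden. The missing ingredient is a bounded amount of \emph{further} iteration in the dynamical plane: using density of prepoles in $J(f_{\lambda_0})$ and compactness one shows that $f_{\lambda_0}^m(\Delta_n)\supset\overline{U_\delta}$ for some $m\leq N$ with $N$ uniform (Lemma~\ref{freeperiod}), and then a measure-transfer estimate (Lemma~\ref{measest}) --- controlling the degree of $f^m$ on $\Cd\setminus U_\delta$ by the number of period parallelograms it meets, and the derivative bounds away from poles and essential singularities --- gives that a definite proportion of $\Delta_n$ enters $U_\delta$ within $m$ steps. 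Only after this can the proportion be pulled back to the parameter disc, and the pull-back must use the annulus distortion of $\xi_n$ (degree $\leq K$, distortion bounded in terms of $\delta'/\delta''$), not a Koebe estimate on a full disc. With these two repairs your plan coincides with the paper's proof.
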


Obviously Theorem~\ref{B} implies that $\mu(\M_{\delta})=0$ for every $\delta>0$, where $\mu$ is the~Lebesgue measure on~$\Cd$. Hence
\bdm \mu(\M)\leq\sum_{n\geq1}\mu(\M_{1/n})=0, \edm
which is exactly the statement of Theorem~\ref{A}.

\medskip
In order to prove the~Theorem~\ref{B} we will focus on a~parameter $\lambda_0\in\M$ and its neighbourhood $B(\lambda_0,r)$ in the parameter plane. We will see how the~assumptions on the~critical value $e_{\lambda_0}$ and dynamical properties of families $\W_t$ and $\W_s$ imply exponential expansion on $\HS$, the~closure of the~forward trajectory of $e_{\lambda_0}$ under~$f_{\lambda_0}$. This leads to the existence of a~holomorphic motion $h:\HS\times B(\lambda_0,r)\to\C$ conjugating the dynamics of~$f_{\lambda_0}$ and nearby maps~$f_{\lambda}$, $\lambda\in{B(\lambda_0,r)}$, on a~neighbourhood of~$\HS$. Next, we will use the expansion property and the~absence of line fields for Misiurewicz elliptic maps to derive nice distortion properties binding space and parameter derivatives in a~small scale. This allows us to control the growth of a~parameter ball $B(\lambda_0,r)$ to a~big scale where in turn we can estimate the measure of those parameters which cannot belong to $\M_{\delta}\subset\M$.

\subsection{Holomorphic motion}\label{secholmot}

Take now a~parameter $\lambda_0\in\M$ for any of those two families. As we have just seen, all critical values of $f_{\lambda_0}$ are in the Julia set $J(f_{\lambda_0})$. Recall that the~Fatou set $F(f_{\lambda_0})$ has no wandering domains, Baker domains or Herman rings. Moreover, as we will see in a~moment, $f_{\lambda_0}$ is expanding on the~closure of a~critical trajectory and hence the~close relationship between trajectories of all critical values excludes existence of Siegel discs. We conclude that the Fatous set must be empty, thus $J(f_{\lambda_0})=\Cd$. Pick now one of the critical values in $J(f_{\lambda_0})$ which is not a~pole and denote it by $e_{\lambda}$. Here and in the following sections we use the spherical metric and derivatives unless otherwise stated. 

Consider the~set $\HS=\overline{O_{\lambda_0}(e_{\lambda_0})}$, the~closure of the~forward trajectory of~$e_{\lambda_0}$ under~$f_{\lambda_0}$. It is compact, forward invariant, contains neither critical nor parabolic points. Hence, by Theorem~1.2 in \cite{rvs} (compare also with {\cite[Theorem~1]{gks}}), $\HS$~is a~hyperbolic set, i.e. there are real constants $C>0$ and $a>1$ such that
\bdm |(f^n_{\lambda_0})'(z)|\geq Ca^n \;\textrm{ for all } z\in\HS \; \textrm{ and } n\geq1. \edm

Look now at the nearby maps $f_{\lambda}$, $\lambda\in B(\lambda_0,r)$ either in~$\W_t$ or in~$\W_s$. We will follow the proof of  {\cite[Theorem~III.1.6]{ms}} locally in a~neighbourhood of the hyperbolic set $\HS$ to show that if $r>0$ is sufficiently small, there exists a~holomorphic motion 
\bdm h\colon\HS\times B(\lambda_0,r)\to\C \edm
such that $h_{\lambda_0}=\mathrm{id}$, the map $h_{\lambda}:=h(\cdot,\lambda)\colon\HS\to\HS_{\lambda}$ is quasiconformal for each~$\lambda\in B(\lambda_0,r)$ and $h(z,\cdot)\colon{B(\lambda_0,r)}\to\C$ is holomorphic at every $z\in\HS$. Moreover, it respects the~dynamics, i.e.
\bdm h_{\lambda}\circ f_{\lambda_0}=f_{\lambda}\circ h_{\lambda} \;\textrm{ on }\; \HS. \edm

Notice first that $\HS$ contains no prepoles of~$f_{\lambda_0}$. Fix an $N\in\N$ such that
$$ \forall\;z\in\HS, \quad |(f_{\lambda_0}^N)'(z)|\geq2\tilde{a} $$
for some constant $\tilde{a}\gg1$. Take now a~neighbourhood $\NN$ of $\HS$ such that even in a~bigger neighbourhood $\NN_{\varepsilon}=B(\NN,\varepsilon)$, for some $\varepsilon>0$, there are neither critical points of~$f_{\lambda_0}$ nor prepoles of~$f_{\lambda_0}$ of orders $1,2,\ldots,N$.

Now, we want to choose small enough radius $r>1$ in the~parameter space. We do it in two steps, decreasing $\NN$ if necessary, so that the~following two conditions are satisfied:
\begin{enumerate}
	\item $\forall\;\lambda\in{B(\lambda_0,r)}$, the~set $\NN$ contains neither critical points nor prepoles of $f_{\lambda}$ of orders $1,2,\ldots,N$.
	\item $\forall\;\lambda\in{B(\lambda_0,r)}, \; \forall\;z\in\NN, \quad |(f_{\lambda}^N)'(z)|\geq\tilde{a}\gg1$.
\end{enumerate}
It is possible since critical points and poles depend analytically on the~parameter~$\lambda$ and the~derivative $(f_{\lambda}^N)'(z)$ changes continuously with~$\lambda$.

The~choice of $r>0$ guarantees the~expanding property for all functions $f_{\lambda}$, $\lambda\in{B(\lambda_0,r)}$, where the~constants $C>0$ and $a>1$ may have changed. 
\begin{lemma}\label{expand} There are constants $C>0$, $a>1$ and a~radius $r>0$ such that whenever $f^j_{\lambda}(z)\in\NN$ for $j=0,\ldots,k$ and $\lambda\in{B(\lambda_0,r)}$, then
\bdm |(f^k_{\lambda})'(z)|\geq Ca^k. \edm \end{lemma}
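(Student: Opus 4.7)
The plan is to apply the chain rule after partitioning the orbit into blocks of length $N$. The first ingredient is already in place: property~2 of the choice of $\NN$ and $r$ gives $|(f_{\lambda}^N)'(w)|\geq\tilde{a}$ whenever $w\in\NN$ and $\lambda\in B(\lambda_0,r)$, with $\tilde{a}\gg 1$. The second ingredient, which still needs justification, is a~uniform positive lower bound $|(f_{\lambda}^s)'(w)|\geq c$ for $0\leq s<N$, $w\in\overline{\NN}$ and $\lambda$ in a slightly smaller closed parameter ball.

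To obtain the second ingredient, I would exploit property~1 together with compactness. Since $\NN_{\varepsilon}$ contains no prepoles of $f_{\lambda}$ of orders $1,\ldots,N$, each iterate $f_{\lambda}^s$ with $s\leq N$ is meromorphic on $\NN_{\varepsilon}$ and takes only finite values on $\overline{\NN}$; in particular the spherical derivative $(w,\lambda)\mapsto|(f_{\lambda}^s)'(w)|$ depends continuously on both variables. Moreover $(f_{\lambda}^s)'$ cannot vanish anywhere on $\overline{\NN}$, for otherwise the chain rule would force $(f_{\lambda}^N)'$ to vanish at the same point, contradicting property~2. Continuity on the compact set $\overline{\NN}\times\overline{B(\lambda_0,r/2)}$ then yields a~positive lower bound $c_s>0$ for each $s$; taking $c=\min_{0\leq s<N}c_s$ gives the desired uniform bound (after shrinking $r$ to $r/2$, which we may do without loss of generality).

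With both ingredients, the lemma follows quickly. Given $z$ with $f_{\lambda}^j(z)\in\NN$ for $j=0,\ldots,k$, write $k=mN+s$ with $0\leq s<N$ and use the chain rule to split
\[
(f_{\lambda}^k)'(z)=(f_{\lambda}^s)'\bigl(f_{\lambda}^{mN}(z)\bigr)\cdot\prod_{j=0}^{m-1}(f_{\lambda}^N)'\bigl(f_{\lambda}^{jN}(z)\bigr).
\]
Each base point $f_{\lambda}^{jN}(z)$, $j=0,\ldots,m-1$, lies in $\NN$, so property~2 bounds each factor in the product by $\tilde{a}$, while the second ingredient bounds the leading factor by $c$. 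Setting $a=\tilde{a}^{1/N}>1$ and using $mN\geq k-N+1$, this yields
\[
|(f_{\lambda}^k)'(z)|\geq c\,\tilde{a}^{m}=c\,a^{mN}\geq c\,a^{-(N-1)}\cdot a^{k}=:C\,a^{k},
\]
with $C=c\,a^{-(N-1)}>0$.

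The main obstacle is the second ingredient: verifying rigorously that each intermediate iterate $f_{\lambda}^s$ is well-behaved (meromorphic, without poles, without vanishing derivative) on $\overline{\NN}$ and that this persists uniformly in $\lambda$. Property~1 takes care of the poles and essential singularities created by prepoles, while property~2 rules out critical points of the intermediate iterates via the chain rule, so the argument reduces to continuity on a compact product—routine once these two observations are made.
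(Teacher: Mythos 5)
Your argument is correct and is essentially the one the paper intends: Lemma~\ref{expand} is stated without an explicit proof, but the block decomposition $k=mN+s$, using property~2 for the full blocks and a uniform lower bound on the derivative over the remaining at most $N-1$ steps (which the paper obtains from the absence of critical points, poles and low-order prepoles in the compact set $\NN$, cf.\ the proof of Lemma~\ref{expansionlemma} and the bound $|f_{\lambda}'|>C_{\delta}^{-1}$ used later), is exactly the intended mechanism. The only nuance is that property~1 gives the prepole-free statement on $\NN_{\varepsilon}$ only for $\lambda_0$ and on $\NN$ for nearby $\lambda$; since $\NN$ may be taken closed and bounded (as the paper later assumes), your compactness and nonvanishing argument for the remainder factor goes through unchanged.
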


Next step is to introduce an~appropriate adapted metric defined for $z\in\NN$ as follows
$$ d(z)=\frac{1}{N}\sum_{n=0}^{N-1}|(f_{\lambda_0}^n)'(z)|. $$
By the~careful choice of~$\NN$ we get that $d(z)\leq{C_1}$ for all $z\in\NN$. Additionally, we can modify $C_1$ so that the~estimate remains valid for every function $f_{\lambda}$, $\lambda\in{B(\lambda_0,r)}$, decreasing $r$ if necessary.

Let us compute derivative~$|f'|_d$ of the~function~$f:=f_{\lambda_0}$ with respect to the~adapted metric for $z\in\NN$.
$$\aligned 
|f'(z)|_d=&|f'(z)|\frac{d(f(z))}{d(z)}= \frac{|f'(z)|\frac{1}{N}\sum\limits_{n=0}^{N-1}|(f^n)'(f(z))|}{\frac{1}{N}\sum\limits_{n=0}^{N-1}|(f^n)'(z)|}= \frac{\frac{1}{N}\sum\limits_{n=0}^{N-1}|(f^{n+1})'(z)|}{\frac{1}{N}\sum\limits_{n=0}^{N-1}|(f^n)'(z)|}= \\
=&1+\frac{\frac{1}{N}(|(f^N)'(z)|-1)}{\frac{1}{N}\sum\limits_{n=0}^{N-1}|(f^n)'(z)|}\geq 1+\frac{\tilde{a}-1}{NC_1}>1, 
\endaligned$$
hence $|(f_{\lambda_0})'|_d\geq const>1$ on $\NN$.

Take now a~nearby function $g:=f_{\lambda}$, where $\lambda\in{B(\lambda_0,r)}$ for sufficiently small $r>0$, and $z\in\NN$.
$$\aligned
|g'(z)|_d=&|g'(z)|\frac{d(g(z))}{d(z)}= \frac{|g'(z)|\frac{1}{N}\sum\limits_{n=0}^{N-1}|(f^n)'(g(z))|}{\frac{1}{N}\sum\limits_{n=0}^{N-1}|(f^n)'(z)|}= \frac{|g'(z)|}{|f'(z)|}\frac{\frac{1}{N}\sum\limits_{n=0}^{N-1}|(f^n\circ{g})'(z)|}{\frac{1}{N}\sum\limits_{n=0}^{N-1}|(f^{n+1})'(z)|}|f'(z)|_d.
\endaligned$$
Since $|(f_{\lambda_0})'(z)|_d\geq const>1$ on~$\NN$, therefore if the~radius $r>0$ is sufficiently small (decreasing $\NN$ if necessary), then for any $\lambda\in{B(\lambda_0,r)}$, 
$$ |(f_{\lambda})'|_d\geq\tilde{C}>1 \quad\textrm{on} \quad \NN.$$
This is a~consequence of the~form of derivative with respect to the~adapted metric as we consider only finitely many iterates, there are no prepoles of~$f_{\lambda}$ of orders $1,2,\ldots,N$ in~$\NN$ and values of functions and iterates (which are holomorphic, bounded and equicontinuous on~$\NN$) depend continuously on~$\lambda$.

We proceed exactly as in~\cite{ms}. Let $\varepsilon>0$ be such that for every $z\in\HS$, $B(z,\varepsilon)_d\subset\NN$ (the~ball with respect to the~adapted metric). If the~radius $r>0$ is sufficiently small, then for every $\lambda\in{B(\lambda_0,r)}$ we have $f_{\lambda}(B(z,\varepsilon)_d)\supset B(f_{\lambda_0}(z),\varepsilon)_d$. Hence for every $n\in\N$ and $z\in\HS$, the~set
$$ W_{\lambda,n}=\left\{w: f_{\lambda}^k(w)\in{B\left(f_{\lambda_0}^k(z),\varepsilon\right)_d \;\textrm{for}\; k=0,1,\ldots,n}\right\} $$
is nonempty and its diameter does not exceed $2\varepsilon\tilde{C}^{-n}$. There exists, therefore, a~unique point $h_{\lambda}(z)$ such that $f_{\lambda}^n(h_{\lambda}(z))\in{B(f_{\lambda_0}^n(z),\varepsilon)_d}$ for all $n\in\N$. We get immediately that $h_{\lambda}(f_{\lambda_0}(z))=f_{\lambda}(h_{\lambda}(z))$. Moreover, $h_{\lambda}$ is continuous and injective.

Since the~holomorphic motion~$h\colon\HS\times B(\lambda_0,r)\to\C$ respects the~dynamics and $f_{\lambda_0}(\HS)\subset\HS$ we immediately get that
$$ f_{\lambda}(h_{\lambda}(\HS))=h_{\lambda}(f_{\lambda_0}(\HS))\subset h_{\lambda}(\HS), $$
thus the~set $\HS_{\lambda}:=h_{\lambda}(\HS)$ is $f_{\lambda}$-invariant and by the~Lemma~\ref{expand}, it is a~hyperbolic set for $f_{\lambda}$.

\medskip
Now, we want to obtain so-called transversality condition (cf. \cite{a}), which says that the critical value~$e_{\lambda}$ of $f_{\lambda}$ cannot follow the holomorphic motion $h_{\lambda}(e_{\lambda_0})$ of the critical value of $f_{\lambda_0}$ in the whole parameter ball $B(\lambda_0,r)$. In the triangular case it follows e.g. from the non-existence of invariant line-fields for Misiurewicz maps proved by Graczyk, Kotus and \'Swi\k{a}tek in {\cite[Theorem~2]{gks}}, for the case of square lattices we refer to the~more general result {\cite[Theorem~1.1]{rvs}}. For the convenience of the reader, we will use notation analogous to \cite{a}.

Recall that there is a~strong relationship between the trajectories of critical values of functions in both families $\W_t$ and $\W_s$, in particular the trajectory of $e_{\lambda}$ determines the dynamics of~$f_{\lambda}$. Consider a~holomorphic function $x:B(\lambda_0,r)\to\C$ given by
\bdm x(\lambda)=e_{\lambda}-h_{\lambda}(e_{\lambda_0}) \edm
which is exactly the difference between the~critical value of $f_{\lambda}$ and the~holomorphic motion of the~critical value of the~starting map $f_{\lambda_0}$ (we assume that the~radius of the~parameter ball is so small that there is only one critical value of~$f_{\lambda}$ close to~$e_{\lambda_0}$). Note that $h_{\lambda}(e_{\lambda_0})$ always belongs to the~hyperbolic set $\HS_{\lambda}$. We obviously have that $x(\lambda_0)=0$. Our aim is to show that $\lambda_0$ is an~isolated zero of $x$.

\begin{lemma}\label{transver} The function $x$ is not identically zero in any ball $B(\lambda_0,r)$ in the parameter plane. \end{lemma}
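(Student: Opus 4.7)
The plan is to argue by contradiction. Suppose $x\equiv0$ on $B(\lambda_0,r)$, so that $e_\lambda = h_\lambda(e_{\lambda_0})$ for every $\lambda$ in that ball. Since $e_{\lambda_0}\in\HS$ and $h_\lambda$ conjugates $f_{\lambda_0}$ and $f_\lambda$ on $\HS$, an immediate induction yields $f_\lambda^n(e_\lambda) = h_\lambda\bigl(f_{\lambda_0}^n(e_{\lambda_0})\bigr)$ for every $n\geq0$, so the whole forward orbit of $e_\lambda$ sits in the hyperbolic set $\HS_\lambda$ and depends holomorphically on~$\lambda$. Using the rigid relations between critical values recalled in Section~2 (rotation by $\e^{2\pi{i}/3}$ in $\W_t$; the involution $z\mapsto -z$ together with the fixed pole~$0$ in $\W_s$), I would deduce that every critical trajectory of $f_\lambda$, and hence $\PS(f_\lambda)\cap\C$, moves holomorphically over $B(\lambda_0,r)$ in a way that respects the dynamics.

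Next I would upgrade this to a holomorphic motion of the whole sphere. Since $J(f_{\lambda_0})=\Cd$, the iterated preimages under $f_{\lambda_0}$ of any point of $\HS$ are dense in $\Cd$. I would pull the motion back iteratively along branches of $f_{\lambda_0}^{-1}$ that avoid the (already moving) critical set; this produces a holomorphic motion on a dense subset of $\Cd$ along which the conjugacy $h_\lambda\circ f_{\lambda_0}=f_\lambda\circ h_\lambda$ extends, and the $\lambda$-lemma of Ma\~n\'e--Sad--Sullivan then extends it further to a holomorphic motion $H\colon\Cd\times B(\lambda_0,r)\to\Cd$ satisfying $H_\lambda\circ f_{\lambda_0}=f_\lambda\circ H_\lambda$ on $\Cd$. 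In particular each $H_\lambda$ is a quasiconformal conjugacy between $f_{\lambda_0}$ and $f_\lambda$ on the sphere.

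The third step is a rigidity argument. The Beltrami coefficient $\mu_\lambda$ of $H_\lambda$ is $f_{\lambda_0}$-invariant, so wherever it is nonzero it defines an $f_{\lambda_0}$-invariant line field on a subset of $J(f_{\lambda_0})=\Cd$. Theorem~1.1 of \cite{rvs} (together with \cite[Theorem~2]{gks} in the triangular case) forbids any such field for a Misiurewicz elliptic function, so $\mu_\lambda\equiv0$ and $H_\lambda$ is a M\"obius transformation for every $\lambda\in B(\lambda_0,r)$. Because $H_\lambda$ must send the pole set $\Lambda_{\lambda_0}$ of $f_{\lambda_0}$ onto the pole set $\Lambda_\lambda$ of $f_\lambda$, it fixes $\infty$, hence is affine: $H_\lambda(z)=a(\lambda)z+b(\lambda)$ with $b(\lambda)\in\Lambda_\lambda$, and after a lattice translation I may normalise $b(\lambda)=0$. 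Comparing the leading terms $1/z^2$ on both sides of $\wp_{\Lambda_\lambda}(a(\lambda)z)=a(\lambda)\wp_{\Lambda_{\lambda_0}}(z)$ as $z\to0$ forces $a(\lambda)^3=1$, while the identity $a(\lambda)\Lambda_{\lambda_0}=\Lambda_\lambda$ then constrains $\lambda$ to a finite subset of~$\C$. This contradicts the assumption that these relations hold simultaneously for all $\lambda$ in the open ball $B(\lambda_0,r)$.

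The main obstacle I foresee is the second step: extending the motion across $\Cd$ despite the fact that every prepole of $f_{\lambda_0}$ is an essential singularity of some iterate. What makes this possible is precisely the hyperbolicity of $\HS$ (and of $\HS_\lambda$ through Lemma~\ref{expand}) together with the uniform separation of $\PS(f_{\lambda_0})$ from $\Crit(f_{\lambda_0})$ and from~$\infty$ guaranteed by $\lambda_0\in\M$; this allows one to organise the pullback branches so that they stay within a fixed neighbourhood of~$\HS$ at every step and the $\lambda$-lemma delivers a genuine dynamical conjugacy on the sphere, rather than merely a topological motion of a dense subset.
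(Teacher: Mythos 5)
Your argument is correct and follows essentially the same route as the paper: from $x\equiv0$ you get postsingular stability, extend the motion through pullbacks and the $\lambda$-lemma to a quasiconformal conjugacy on $J(f_{\lambda_0})=\Cd$, and contradict the absence of invariant line fields from \cite[Theorem~1.1]{rvs} (cf.\ \cite[Theorem~2]{gks}). The only difference is that you also spell out the degenerate case where every conjugacy is conformal, ruling it out by the affine/lattice computation forcing $a(\lambda)^3=1$ and $a(\lambda)\Lambda_{\lambda_0}=\Lambda_\lambda$; the paper leaves this step implicit, so your version is a slightly more complete rendering of the same proof.
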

\begin{proof} 
Suppose that $x(\lambda)\equiv0$ on some ball $B(\lambda_0,r)$ which means that for any $\lambda$ close to~$\lambda_0$, the~trajectory of the~critical value $e_{\lambda}$ stays in the~appropriate hyperbolic set $\HS_{\lambda}$. It follows that the~trajectories of all critical values of~$f_{\lambda}$, except for the~pole $e_3$ in the case of square lattice, lie in some hyperbolic set. Thus, the~parameter~$\lambda_0$ is postsingularly stable since trajectories of all critical values of~$f_{\lambda}$ behave the~same for all parameters~$\lambda$ close to~$\lambda_0$. We can, therefore, extend $h_{\lambda}$ to a~quasiconformal conjugacy on the~consecutive preimages of~$e_{\lambda}$ and next, by the~$\lambda$-Lemma (cf. {\cite[$\lambda$-Lemma]{mss}}), to a~quasiconformal conjugacy on~the whole Julia set $J(f_{\lambda_0})=\Cd$ between $f_{\lambda_0}$ and $f_{\lambda}$ for any $\lambda\in B(\lambda_0,r)$. In this case however, there would be an~$f_{\lambda_0}$-invariant line field on~$J(f_{\lambda_0})$ which cannot exist by {\cite[Theorem~1.1]{rvs}} (cf. {\cite[Theorem~2]{gks}}).  
\end{proof}

Therefore we have that
\beq\label{xform} x(\lambda)=\alpha_K(\lambda-\lambda_0)^{K}+\alpha_{K+1}(\lambda-\lambda_0)^{K+1}+\ldots  \eeq
for some $K\geq1$ and $\alpha_K\neq0$. This property will be crucial to obtain distortion estimates in the~next section.

\subsection{Distortion estimates}

In this section we derive distortion estimates based on the expansion property near the hyperbolic set~$\HS$. It is rather technical and mainly follows analogous proofs in \cite{a} and~\cite{b}. We decided however to keep it in a~very detailed form for the convenience of the reader and also because of changes which are minor but crucial.

Recall that we have chosen the~neighbourhood $\NN$ of the~hyperbolic set~$\HS$ and the~radius $r>0$ so that for all functions $f_{\lambda}$, $\lambda\in{B(\lambda_0,r)}$, we have the~expansion property stated in Lemma~\ref{expand}. Assume moreover that $\NN$ is closed, bounded (hence compact in~$\C$) and for some $\delta>0$,
$$  \NN\cap\big(B\big(\Crit(f_{\lambda}),\delta\big)\cup{B\big(\infty,\delta\big)\big)}=\emptyset. $$

If we now take some $\delta'>0$ for which $\{z:\dist(z,\HS)\leq11\delta'\}\subset\NN$, then we will always assume $r>0$ to be so small that $\{z:\dist(z,\HS_{\lambda})\leq10\delta'\}\subset\NN$ for each $\lambda\in B(\lambda_0,r)$. This means that $\HS_{\lambda}$, the~hyperbolic set for $f_{\lambda}$, is well inside $\NN$.

The~neighbourhood $\NN$ was chosen so that for some $N\geq1$, $\tilde{a}>1$ and for all $z\in\NN$, $\lambda\in B(\lambda_0,r)$, we have $|(f^N_{\lambda})'(z)|\geq\tilde{a}$. Thus for every $z\in\NN$ we can find some radius $r(z)>0$ such that 
\beq\label{expansion} |f^N_{\lambda}(z)-f^N_{\lambda}(w)|\geq\tilde{a}|z-w| \eeq
for all $w\in\NN$ with $|z-w|\leq r(z)$ (decreasing slightly $\tilde{a}>1$ if necessarily). Since $\NN$ is compact and $r(z)$ changes continuously, we can find a~universal $\tilde{r}>1$ such that (\ref{expansion}) holds for every $z,w\in\NN$ with $|z-w|\leq\tilde{r}$. This implies exponential expansion in a~small scale.

\begin{lemma}\label{expansionlemma} 
There are constants $\tilde{\delta},C>0$ and $a>1$ such that for every $\lambda\in{B(\lambda_0,r)}$ and every $z,w\in\NN$, if 
$f^j_{\lambda}(z),f^j_{\lambda}(w)\in\NN$ and $|f^j_{\lambda}(z)-f^j_{\lambda}(w)|\leq\tilde{\delta}$ for $j=0,\ldots,k$, then 
\bdm  |f^k_{\lambda}(z)-f^k_{\lambda}(w)|\geq Ca^k |z-w|. \edm 
\end{lemma}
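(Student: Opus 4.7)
The plan is to iterate the single-step expansion (\ref{expansion}) over blocks of $N$ iterates and to absorb the sub-$N$ remainder into the multiplicative constant~$C$. First, I would establish a uniform local bi-Lipschitz estimate for the finite iterates $f^s_{\lambda}$ with $0\le s\le N-1$. Since $\NN_{\varepsilon}$ contains no critical points of $f_{\lambda}$ and no prepoles of orders $1,\ldots,N$ for any $\lambda\in B(\lambda_0,r)$, each map $f^s_{\lambda}$ is holomorphic with non-vanishing derivative on a~fixed open neighbourhood of~$\NN$. Compactness of $\NN$ and $\overline{B(\lambda_0,r)}$, together with continuous dependence of $f^s_{\lambda}$ and its derivative on $(z,\lambda)$, yield constants $r_1>0$ and $m\in(0,1]$ such that
$$ |f^s_{\lambda}(u_1)-f^s_{\lambda}(u_2)|\ge m\,|u_1-u_2| $$
whenever $u_1,u_2\in\NN$ with $|u_1-u_2|\le r_1$, $f^j_{\lambda}(u_1),f^j_{\lambda}(u_2)\in\NN$ for $j=0,\ldots,s$, $\lambda\in B(\lambda_0,r)$, and $0\le s<N$.

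Second, I would set $\tilde{\delta}:=\min(\tilde{r},r_1)$ and, under the hypothesis of the lemma, decompose $k=qN+s$ with $0\le s<N$. Applying (\ref{expansion}) to the pair $\bigl(f^{jN}_{\lambda}(z),f^{jN}_{\lambda}(w)\bigr)$ for $j=0,\ldots,q-1$ (admissible since these points lie in $\NN$ at mutual distance $\le\tilde{\delta}\le\tilde{r}$) gives, by a straightforward finite induction,
$$ |f^{qN}_{\lambda}(z)-f^{qN}_{\lambda}(w)|\ge\tilde{a}^{q}\,|z-w|. $$
Combining this with the bi-Lipschitz bound from the first step applied to the pair $\bigl(f^{qN}_{\lambda}(z),f^{qN}_{\lambda}(w)\bigr)$, whose distance is $\le\tilde{\delta}\le r_1$, produces
$$ |f^{k}_{\lambda}(z)-f^{k}_{\lambda}(w)|\ge m\,\tilde{a}^{q}\,|z-w|\ge \frac{m}{\tilde{a}}\bigl(\tilde{a}^{1/N}\bigr)^{k}\,|z-w|, $$
because $q\ge k/N-1$. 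Setting $a:=\tilde{a}^{1/N}>1$ and $C:=m/\tilde{a}$ completes the estimate.

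The main obstacle I expect is the bi-Lipschitz step: one must ensure that short Euclidean segments joining nearby points of $\NN$ (and their forward iterates of orders $<N$) actually remain inside the domain of analyticity of $f^s_{\lambda}$, i.e. avoid the prepoles of orders $1,\ldots,N$, so that the lower bound on $|(f^s_{\lambda})'|$ on $\NN$ translates into a genuine distance bound. This is handled by taking $\NN$ to be a~tubular neighbourhood of~$\HS$ that sits comfortably inside~$\NN_{\varepsilon}$ and then shrinking $r_1$ below the tubular cushion; after that, the rest of the proof is a clean iteration of what has already been arranged in Section~\ref{secholmot}.
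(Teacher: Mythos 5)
Your proposal is correct and follows essentially the same route as the paper: write $k$ as full blocks of $N$ plus a remainder $<N$, use the prearranged $N$-step expansion (\ref{expansion}) on the blocks, and absorb the remainder via a uniform lower Lipschitz bound coming from compactness and the absence of critical points and low-order prepoles near $\NN$. The only (immaterial) differences are that the paper handles the remainder iterates first and the blocks afterwards, and bounds the remainder by iterating a one-step estimate $|f_{\lambda}(z)-f_{\lambda}(w)|\geq\tilde{C}|z-w|$ rather than a direct bound for $f^{s}_{\lambda}$, $s<N$.
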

\begin{proof} 
Every integer $k$ can be written in the form $k=pN+q$, where $q\leq N-1$. For some $\tilde{C},\tilde{\delta}>0$ we can estimate for all $\lambda\in B(\lambda_0,r)$
\bdm |f_{\lambda}(z)-f_{\lambda}(w)|\geq\tilde{C}|z-w| \; \textrm{ for all } z,w\in\NN \; \textrm{ with } |z-w|\leq\tilde{\delta}. \edm
If we now take $z,w\in\NN$ for which assumptions of the lemma are satisfied, then
\bdm |f^k_{\lambda}(z)-f^k_{\lambda}(w)|\geq\tilde{a}^p|f^q_{\lambda}(z)-f^q_{\lambda}(w)|\geq \tilde{a}^p\tilde{C}^q|z-w| \geq a^kC|z-w| \edm
for $a=\tilde{a}^{\frac{1}{m}}$ and some $C>0$.
\end{proof}

We will use the expansion property in the following distortion estimates to show that in a~small scale parameter and space derivatives are comparable. For $\lambda\in B(\lambda_0,r)$ and $n\geq0$ put
\bdm \xi_n(\lambda)=f^n_{\lambda}(e_{\lambda}) \quad \textrm{ and } \quad \mu_n(\lambda)=f^n_{\lambda}(h_{\lambda}(e_{\lambda_0}))=h_{\lambda}(f^n_{\lambda_0}(e_{\lambda_0})). \edm
Then $\xi_n(\lambda)$ is the~forward orbit of the~critical value for $f_{\lambda}$ while $\mu_n(\lambda)$ is the~holomorphic motion of the~critical orbit for $f_{\lambda_0}$, hence $\mu_n(\lambda)\in\HS_{\lambda}$. In particular $x(\lambda)=\xi_0(\lambda)-\mu_0(\lambda)$.

The following lemma will be used several times in our distortion estimates. See \cite{a} for references.
\begin{lemma}\label{helpful} 
Let $u_n\in\C$ for $n=1,\ldots,N$. Then
\bdm \left|\prod_{n=1}^{N}(1+u_n)-1\right| \leq \exp\left(\sum_{n=1}^{N}|u_n|\right)-1. \edm 
\end{lemma}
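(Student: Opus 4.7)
The plan is to reduce the statement to the elementary real inequality $1+x\leq e^x$, valid for $x\geq 0$. First I would expand the product multinomially to obtain
$$\prod_{n=1}^N (1+u_n) - 1 = \sum_{\emptyset \neq S \subset \{1,\ldots,N\}} \prod_{n\in S} u_n,$$
and then apply the triangle inequality together with $\bigl|\prod_{n\in S} u_n\bigr| = \prod_{n\in S}|u_n|$ to get
$$\left|\prod_{n=1}^N (1+u_n) - 1\right| \leq \sum_{\emptyset \neq S \subset \{1,\ldots,N\}} \prod_{n\in S} |u_n| = \prod_{n=1}^N(1+|u_n|) - 1.$$
Finally, applying $1+|u_n|\leq e^{|u_n|}$ to each factor and multiplying yields $\prod_{n=1}^N(1+|u_n|) \leq \exp\bigl(\sum_{n=1}^N |u_n|\bigr)$, which is exactly the claimed bound.

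An essentially equivalent alternative is induction on $N$. The base case $N=1$ is just $|u_1|\leq e^{|u_1|}-1$, which follows from the Taylor expansion of $e^x$ (all coefficients are nonnegative). For the induction step one uses the identity
$$\prod_{n=1}^N(1+u_n) - 1 = (1+u_N)\Bigl[\prod_{n=1}^{N-1}(1+u_n) - 1\Bigr] + u_N,$$
bounds the bracketed factor via the inductive hypothesis, estimates $|1+u_N|\leq 1+|u_N|\leq e^{|u_N|}$, and combines the two terms to recover the estimate at level $N$.

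There is no genuine obstacle here: this is a standard auxiliary estimate that converts a bound on $\sum_n |u_n|$ into a bound on $\bigl|\prod_n(1+u_n) - 1\bigr|$. The only points requiring a bit of care are the correct bookkeeping in the multinomial expansion and the fact that the inequality $1+x\leq e^x$ must be applied factor by factor, so that the exponential of the \emph{sum} emerges only after multiplying the $N$ pointwise bounds.
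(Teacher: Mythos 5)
Your argument is correct: the multinomial expansion plus the triangle inequality gives $\bigl|\prod_{n=1}^N(1+u_n)-1\bigr|\leq\prod_{n=1}^N(1+|u_n|)-1$, and the factorwise bound $1+|u_n|\leq\exp(|u_n|)$ finishes it; the inductive variant closes correctly as well, since $\e^{|u_N|}(\e^{S}-1)+|u_N|\leq\e^{S+|u_N|}-1$ follows from $1+|u_N|\leq\e^{|u_N|}$. The paper does not prove this lemma itself (it only cites Aspenberg's article for references), and your proof is exactly the standard argument that reference relies on, so nothing is missing.
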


Let us begin with the Main Distortion Lemma concerning control of the space derivative in a~neighbourhood of the hyperbolic set.
\begin{lemma}\label{mdl} 
For every $\varepsilon>0$ we can find $\delta'>0$ and $r>0$ arbitrarily small with the following property. For any $a,b\in{B(\lambda_0,r)}$ if $|\xi_k(\lambda)-\mu_k(\lambda)|\leq\delta'$ for all $k\leq{n}$ and $\lambda=a,b$, then
\bdm   \left|\frac{(f^n_a)'(e_a)}{(f^n_b)'(e_b)}-1\right|<\varepsilon. \edm 
\end{lemma}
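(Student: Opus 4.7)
Plan: I would apply the chain rule and reduce via Lemma~\ref{helpful} to an additive estimate, then split each term into a spatial and a parameter contribution. Writing
\bdm \frac{(f_a^n)'(e_a)}{(f_b^n)'(e_b)}=\prod_{k=0}^{n-1}\frac{f_a'(\xi_k(a))}{f_b'(\xi_k(b))}=\prod_{k=0}^{n-1}(1+u_k), \edm
Lemma~\ref{helpful} reduces the task to showing that $\sum_{k=0}^{n-1}|u_k|<\log(1+\varepsilon)$ under the stated hypothesis. Since $\xi_k(b)\in\NN$ stays uniformly away from $\Crit(f_b)$, the denominator $f_b'(\xi_k(b))$ is bounded below, so it suffices to control the numerator difference
\bdm |f_a'(\xi_k(a))-f_b'(\xi_k(b))|\leq L_1|\xi_k(a)-\xi_k(b)|+L_2|a-b|, \edm
with uniform Lipschitz constants $L_1$, $L_2$ on the compact set $\NN$.

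The spatial piece is controlled by backward contraction coming from Lemma~\ref{expansionlemma}. Since $|\xi_j(a)-\mu_j(a)|\leq\delta'$ and $|\xi_j(b)-\mu_j(b)|\leq\delta'$, and since the holomorphic motion is uniformly continuous on the compact set $\HS\times\overline{B(\lambda_0,r/2)}$, one has $|\mu_j(a)-\mu_j(b)|\leq\eta(r)$ with $\eta(r)\to 0$ as $r\to 0$; hence the pair $(\xi_j(a),\xi_j(b))$ stays within $\tilde\delta$ throughout the orbit once $\delta'$ and $r$ are small enough. Running the expansion lemma backward from step $n$ then gives
\bdm |\xi_k(a)-\xi_k(b)|\leq C^{-1}a^{k-n}(2\delta'+\eta(r)), \edm
and the resulting geometric series yields $\sum_k|\xi_k(a)-\xi_k(b)|=O(\delta'+\eta(r))$, uniformly in $n$.

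The main obstacle is the parameter piece, because the naive bound $nL_2|a-b|$ is not uniform in $n$. To sidestep this I would refine the factorisation by inserting the motion points,
\bdm \frac{f_a'(\xi_k(a))}{f_b'(\xi_k(b))}=\frac{f_a'(\xi_k(a))}{f_a'(\mu_k(a))}\cdot\frac{f_a'(\mu_k(a))}{f_b'(\mu_k(b))}\cdot\frac{f_b'(\mu_k(b))}{f_b'(\xi_k(b))}, \edm
so that the outer two quotients are each of the form $1+O(\delta' a^{k-n})$ by backward contraction applied separately along each orbit, contributing summable errors. The middle factors telescope over $k=0,\ldots,n-1$ into the ratio $(f_a^n)'(\mu_0(a))/(f_b^n)'(\mu_0(b))$, i.e.\ a ratio of $n$-th iterate derivatives along the holomorphic motion $\mu_0(\lambda)=h_\lambda(e_{\lambda_0})$ of a fixed point of $\HS$. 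Because $\mu_0(\lambda)$ depends holomorphically on $\lambda$ and $h_\lambda$ has quasiconformal distortion tending to $1$ as $r\to 0$, this last ratio can be made as close to $1$ as desired, uniformly in $n$, by combining the transversality from Lemma~\ref{transver} with standard parameter-plane distortion estimates; this is the technical heart of the lemma, and is where hyperbolicity of $\HS$ really does the work. Assembling the three contributions and applying Lemma~\ref{helpful} then yields the required bound.
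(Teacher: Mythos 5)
Your decomposition is exactly the one the paper uses (insert the motion points $\mu_k(\lambda)$, handle the two outer quotients by backward contraction along each orbit separately, and reduce to the middle ratio $(f_a^n)'(\mu_0(a))/(f_b^n)'(\mu_0(b))$), and your treatment of the outer factors via Lemma~\ref{expansionlemma} and Lemma~\ref{helpful} is fine. The gap is precisely at the step you yourself call the technical heart: you assert that the middle ratio is close to $1$ \emph{uniformly in $n$} because $h_\lambda$ is quasiconformal with small dilatation and by ``standard parameter-plane distortion estimates''. That mechanism does not work, and the assertion as stated is false without using the hypothesis of the lemma. The middle ratio is a product of $n$ factors $f_a'(\mu_k(a))/f_b'(\mu_k(b))$, each only $1+\O(|a-b|)$, and there is no cohomological cancellation: along a periodic orbit of the hyperbolic set the multiplier varies holomorphically and generically non-trivially with $\lambda$, so for fixed $a\neq b$ the ratio of the $n$-fold derivative products along conjugated orbits diverges (or tends to $0$) as $n\to\infty$. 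Quasiconformality of $h_\lambda$ close to conformal gives no control over this multiplicative cocycle.

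What saves the argument, and what is missing from your proposal, is a bound on $n$ in terms of $|\lambda-\lambda_0|$, and this is where transversality actually enters. Since $|\xi_k(\lambda)-\mu_k(\lambda)|\leq\delta'$ for all $k\leq n$, Lemma~\ref{expansionlemma} gives $\delta'\geq|\xi_n(\lambda)-\mu_n(\lambda)|\geq Ca^n|x(\lambda)|$, hence $n\leq-C\log|x(\lambda)|$; by the expansion (\ref{xform}) of $x$ at $\lambda_0$ (Lemma~\ref{transver}), $|x(\lambda)|\gtrsim|\lambda-\lambda_0|^K$, so $n\leq-\tilde{C}\log|\lambda-\lambda_0|$, which is inequality (\ref{nlog}) in the paper. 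Writing $a_{\lambda,j}=f_\lambda'(\mu_j(\lambda))=a_{\lambda_0,j}\bigl(1+c_j(\lambda-\lambda_0)^l+\ldots\bigr)$ (analytic dependence on $\lambda$), the middle ratio becomes
\bdm \prod_{j=0}^{n-1}\frac{a_{a,j}}{a_{b,j}}=\frac{1+cn(a-\lambda_0)^l+\ldots}{1+cn(b-\lambda_0)^l+\ldots}, \edm
and only now, because $n|\lambda-\lambda_0|^l=\O\bigl(|\lambda-\lambda_0|^l\log(1/|\lambda-\lambda_0|)\bigr)\to0$, can both numerator and denominator be made arbitrarily close to $1$ by shrinking $r$. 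Without this logarithmic bound on $n$ your claim of uniformity in $n$ has no justification; with it, your outline becomes the paper's proof.
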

\begin{proof} 
First we will show that for an~arbitrarily small $\varepsilon_1=\varepsilon_1(\delta')$, it is possible to choose $\delta'>0$ so that 
\beq\label{dist1} \left|\frac{(f^n_{\lambda})'(\mu_0(\lambda))}{(f^n_{\lambda})'(\xi_0(\lambda))}-1\right|\leq \varepsilon_1 \eeq  
provided $|\xi_k(\lambda)-\mu_k(\lambda)|\leq\delta'$ for all $k\leq{n}$.

By the expansion property and since $|f_{\lambda}'|>C_{\delta}^{-1}$ on $\NN$ for some $C_{\delta}>0$, we can estimate for any $\lambda\in{B(\lambda_0,r)}$:
\bdm\aligned
 \sum_{j=0}^{n-1}&\left|\frac{f_{\lambda}'(\mu_j(\lambda))-f_{\lambda}'(\xi_j(\lambda))}{f_{\lambda}'(\xi_j(\lambda))}\right|\leq C_{\delta}\sum_{j=0}^{n-1}\left|f_{\lambda}'(\mu_j(\lambda))-f_{\lambda}'(\xi_j(\lambda))\right| \leq \\ 
 &\leq C_{\delta}\max_{z\in\NN}|f_{\lambda}''(z)|\sum_{j=0}^{n-1}|\mu_j(\lambda)-\xi_j(\lambda)| \leq \tilde{C}\sum_{j=0}^{n-1}Ca^{j-n}|\mu_n(\lambda)-\xi_n(\lambda)|\leq C'\delta',  
\endaligned\edm
where $\max|f_{\lambda}''(z)|$ is bounded on $B(\lambda_0,r)$ since $\NN$ contains no poles of~$f_{\lambda}^j$ for $j=1,\ldots,N$ and $\lambda\in{B(\lambda_0,r)}$. Using Lemma~\ref{helpful} we obtain the~inequality (\ref{dist1}) if $\delta'>0$ is small enough.

Secondly, for any $\varepsilon_2>0$, if $\delta'>0$ and $r>0$ are chosen sufficiently small, then for every $t,s\in{B(\lambda_0,r)}$,
\beq\label{dist2} \left|\frac{(f^n_t)'(\mu_0(t))}{(f^n_s)'(\mu_0(s))}-1\right|\leq\varepsilon_2. \eeq 
Put $a_{\lambda,j}=f_{\lambda}'(\mu_j(\lambda))$. Since each $a_{\lambda,j}$ is analytic with respect to~$\lambda$, it can be expressed as follows: $a_{\lambda,j}=a_{\lambda_0,j}(1+c_j(\lambda-\lambda_0)^l+\ldots)$. Moreover, by Lemma~\ref{expansionlemma} and (\ref{xform}), we have that
\beq\label{nlog} n\leq-C\log|x(\lambda)|\leq-\tilde{C}\log|\lambda-\lambda_0|,\eeq 
where constants depend only on~$\delta'$ and not on $n$. Thus, if $c=\sum_{j=0}^{n-1}c_j$, then
\bdm \frac{(f^n_t)'(\mu_0(t))}{(f^n_s)'(\mu_0(s))}=\prod_{j=0}^{n-1}\frac{a_{t,j}}{a_{s,j}}= \prod_{j=0}^{n-1}\frac{a_{\lambda_0,j}(1+c_j(t-\lambda_0)^l+\ldots)}{a_{\lambda_0,j}(1+c_j(s-\lambda_0)^l+\ldots)}= \frac{1+cn(t-\lambda_0)^l+\ldots}{1+cn(s-\lambda_0)^l+\ldots}. \edm
Now, both the~numerator and the~denominator can be made arbitrarily close to one if only $r>0$ is small enough, since they are of order $1+\O(|t-\lambda_0|^l\log|t-\lambda_0|)$ and $1+\O(|s-\lambda_0|^l\log|s-\lambda_0|)$.

Putting together (\ref{dist1}) and (\ref{dist2}) we obtain the statement of the lemma. 
\end{proof}

\vspace{1em}
Next we want to compare space and parameter derivatives.
\begin{lemma}\label{compare} 
Let $\varepsilon>0$. If $\delta'>0$ is sufficiently small, then for every $0<\delta''<\delta'$, there exists an~$r>0$ such that the following holds. For any $\lambda\in{B(\lambda_0,r)}$, if $|\xi_k(\lambda)-\mu_k(\lambda)|\leq\delta'$ for $k\leq{n}$ and $|\xi_n(\lambda)-\mu_n(\lambda)|\geq\delta''$, then
\bdm \left|\frac{\xi_n'(\lambda)}{(f^n_{\lambda})'(\mu_0(\lambda))x'(\lambda)}-1\right|\leq\varepsilon. \edm 
\end{lemma}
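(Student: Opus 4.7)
The plan is to express $\xi_n(\lambda)-\mu_n(\lambda)$ as a product $x(\lambda)\,H_n(\lambda)$ with $H_n(\lambda)\approx(f_{\lambda}^n)'(\mu_0(\lambda))$, differentiate, and argue that the additional terms are negligible compared with $x'(\lambda)\,H_n(\lambda)$. By the fundamental theorem of calculus along the segment from $\mu_0(\lambda)$ to $e_{\lambda}=\xi_0(\lambda)$,
\[
\xi_n(\lambda)-\mu_n(\lambda)=x(\lambda)\,H_n(\lambda),\qquad H_n(\lambda):=\int_0^1(f_{\lambda}^n)'\bigl(\mu_0(\lambda)+t\,x(\lambda)\bigr)\,dt.
\]
The whole segment has length $\leq\delta'$ and Lemma~\ref{expansionlemma} keeps its $j$-th images within $\delta'$ of $\mu_j(\lambda)$ for $j\leq n$; the telescoping that produced~(\ref{dist1}) then yields $|H_n(\lambda)/(f_{\lambda}^n)'(\mu_0(\lambda))-1|\leq\varepsilon_1(\delta')$, with $\varepsilon_1$ arbitrarily small.

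Differentiating $\xi_n-\mu_n=xH_n$ in $\lambda$ and dividing by $(f_{\lambda}^n)'(\mu_0(\lambda))\,x'(\lambda)$, it suffices to bound $\mu_n'/(x'H_n)$ and $xH_n'/(x'H_n)$. Since $\mu_n(\lambda)=h_{\lambda}(f_{\lambda_0}^n(e_{\lambda_0}))$ with $h$ a holomorphic motion of the compact set~$\HS$, the derivative $\mu_n'(\lambda)$ is uniformly bounded in~$n$; combining $|x'(\lambda)|\gtrsim|\lambda-\lambda_0|^{K-1}$ from~(\ref{xform}) with $|x\,H_n|\geq\delta''/2$ (the hypothesis $|\xi_n-\mu_n|\geq\delta''$ together with Step~1) and $|x/x'|=\O(|\lambda-\lambda_0|)$ gives $|x'H_n|\gtrsim\delta''/|\lambda-\lambda_0|$, so the first ratio is $\O(|\lambda-\lambda_0|/\delta'')$ and becomes small for $r$ small. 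The second ratio rests on the log-derivative expansion
\[
\frac{d}{d\lambda}\log(f_{\lambda}^n)'(\mu_0(\lambda))=\sum_{j<n}\frac{d}{d\lambda}\log f_{\lambda}'(\mu_j(\lambda)),
\]
each summand of which is bounded by a constant independent of~$j$: the point $\mu_j(\lambda)$ lies in the compact region~$\NN$ where $f_{\lambda}'$, $f_{\lambda}''$ and $\partial_{\lambda}f_{\lambda}'$ are all controlled, and $\mu_j'(\lambda)=\partial_{\lambda}h_{\lambda}(f_{\lambda_0}^j(e_{\lambda_0}))$ is uniformly bounded because $h$ is a holomorphic motion of~$\HS$. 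Hence $|H_n'/H_n|=\O(n)$; invoking (\ref{nlog}) ($n\leq-\tilde C\log|\lambda-\lambda_0|$) and $|x/x'|=\O(|\lambda-\lambda_0|)$ from (\ref{xform}) converts this into $|xH_n'/(x'H_n)|=\O(|\lambda-\lambda_0|\,|\log|\lambda-\lambda_0||)\to 0$.

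The main obstacle is the linear-in-$n$ control of $H_n'/H_n$: a direct differentiation of the product $(f_{\lambda}^n)'(z_t(\lambda))$ at the moving point $z_t(\lambda)=\mu_0(\lambda)+tx(\lambda)$ would produce terms with exponential growth in~$n$, because parameter derivatives of iterates at a generic starting point blow up along the expanding directions. It is essential to reduce the computation to the logarithmic derivative of $(f_{\lambda}^n)'(\mu_0(\lambda))$, where the holomorphic-motion argument keeps $\mu_j'$ bounded uniformly in~$j$; the discrepancy between $H_n$ and $(f_{\lambda}^n)'(\mu_0(\lambda))$ is then absorbed by the Step~1 distortion bound. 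Once this $\O(n)$ estimate is in hand, the logarithmic iteration bound~(\ref{nlog}) consumes it against $|x/x'|=\O(|\lambda-\lambda_0|)$, and summing the two error contributions with $\varepsilon_1$ from Step~1 delivers the stated inequality.
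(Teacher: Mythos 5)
Your overall skeleton matches the paper's: isolate the leading term $x'(\lambda)(f_{\lambda}^n)'(\mu_0(\lambda))$, control the remainder with the uniform bound on $\mu_n'$, the counting estimate (\ref{nlog}), and $|x/x'|=\O(|\lambda-\lambda_0|)$ from (\ref{xform}). But two of your steps have genuine gaps. First, Step~1 defines $H_n(\lambda)=\int_0^1(f_{\lambda}^n)'(\mu_0(\lambda)+t\,x(\lambda))\,dt$ and asserts that Lemma~\ref{expansionlemma} keeps the $j$-th images of the whole segment within $\delta'$ of $\mu_j(\lambda)$. That lemma is only an expansion (lower) bound for points whose orbits are \emph{assumed} to stay close; it gives no shadowing estimate, and the hypothesis $|\xi_k(\lambda)-\mu_k(\lambda)|\le\delta'$ controls only the two endpoints of the segment, not its interior points. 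Worse, prepoles of $f_{\lambda}$ of order at most $n$ are dense in $J(f_{\lambda})=\Cd$, so nothing prevents one from lying on the segment; then $f_{\lambda}^n$ is not holomorphic along it, the fundamental theorem of calculus representation fails, and $H_n$ need not even be defined. The paper avoids interior points entirely: it compares $(f_{\lambda}^n)'(\mu_0(\lambda))x(\lambda)$ with $\xi_n(\lambda)-\mu_n(\lambda)$ through the telescoping product of one-step quotients $f_{\lambda}'(\mu_j)(\xi_j-\mu_j)/(\xi_{j+1}-\mu_{j+1})$, each estimated using only the controlled points $\xi_j,\mu_j$ and the bound on $f_{\lambda}''$ on $\NN$ (the short segment from $\mu_j$ to $\xi_j$ does lie in $\NN$, since $\HS_{\lambda}$ is well inside $\NN$ and $|\xi_j-\mu_j|\le\delta'$); this is how (\ref{Eestim}) is obtained, and your Step~1 should be replaced by it.

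Second, you need $|x H_n'/(x'H_n)|\to0$ and claim $|H_n'/H_n|=\O(n)$ by computing the log-derivative of $(f_{\lambda}^n)'(\mu_0(\lambda))$ and saying the discrepancy with $H_n$ is ``absorbed by the Step~1 distortion bound''. Pointwise closeness $|H_n/(f_{\lambda}^n)'(\mu_0(\lambda))-1|\le\varepsilon_1$ says nothing about the $\lambda$-derivative of the discrepancy; to convert it you would need a Cauchy estimate on a smaller parameter disc (write $H_n=(f_{\lambda}^n)'(\mu_0(\lambda))(1+u_n)$ with $u_n$ holomorphic, $|u_n|\le\varepsilon_1$, hence $|u_n'|\lesssim\varepsilon_1/r$, and only then $|x u_n'/x'|=\O(\varepsilon_1)$), which also presupposes holomorphy of $H_n$ on a full disc of parameters — precisely what Step~1 fails to provide. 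The paper sidesteps this by keeping the discrepancy as an additive, uniformly bounded error $E_n(\lambda)$, bounding $E_n'$ and $\mu_n'$ by Cauchy's formula, and then killing them with the lower bound $|x(\lambda)\prod a_{\lambda,j}|\ge(1-\varepsilon_1)\delta''$ from (\ref{both}) together with $|x/x'|=\O(|\lambda-\lambda_0|)$. With Step~1 replaced by the telescoping argument and the derivative of the discrepancy handled via Cauchy's formula in this way, your outline closes; as written, these two steps do not.
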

\begin{proof}
Note that we have
\beq\label{Eestim} \xi_n(\lambda)=\mu_n(\lambda)+(f^n_{\lambda})'(\mu_0(\lambda))x(\lambda)+E_n(\lambda),  \eeq 
where $|E_n(\lambda)|\leq\varepsilon_1|\xi_n(\lambda)-\mu_n(\lambda)|$ independently of $n$, for any small $\varepsilon_1>0$, if only $\delta'>0$ was chosen small enough. To see this we will proceed similarly as in the first part of the proof of Lemma~\ref{mdl}. First we can write
\bdm  \frac{(f^n_{\lambda})'(\mu_0(\lambda))x(\lambda)}{\xi_n(\lambda)-\mu_n(\lambda)}= \prod_{j=0}^{n-1}\frac{f_{\lambda}'(\mu_j(\lambda))(\xi_j(\lambda)-\mu_j(\lambda))}{\xi_{j+1}(\lambda)-\mu_{j+1}(\lambda)}. \edm
By the expansion property (Lemma~\ref{expansionlemma}) we can estimate as follows
\bdm\aligned  
\left|\frac{f_{\lambda}'(\mu_j(\lambda))(\xi_j(\lambda)-\mu_j(\lambda))}{\xi_{j+1}(\lambda)-\mu_{j+1}(\lambda)}-1\right|\leq  \frac{1}{Ca}&\left|f_{\lambda}'(\mu_j(\lambda))-\frac{\xi_{j+1}(\lambda)-\mu_{j+1}(\lambda)}{\xi_j(\lambda)-\mu_j(\lambda)} \right| \leq \\ \leq\frac{1}{Ca}\max_{z\in\NN}|f_{\lambda}''(z)|\;|\xi_j(\lambda)-\mu_j(\lambda)| &\leq \frac{M''}{Ca}C^{-1}a^{j-n}|\xi_n(\lambda)-\mu_n(\lambda)|,
\endaligned\edm 
for $M''=\max\{|f_{\lambda}''(z)|:z\in\NN,\lambda\in{B(\lambda_0,r)}\}$, which is finite by our careful choice of~$\NN$. Applying Lemma~\ref{helpful} we obtain the estimate we were looking for. 

Put again $f_{\lambda}'(\mu_j(\lambda))=a_{\lambda,j}$, then $(f^n_{\lambda})'(\mu_0(\lambda))=\prod_{j=0}^{n-1}a_{\lambda,j}$. Now, differentiate~$\xi_n$ with respect to $\lambda$. By the Chain Rule we get
\bdm\aligned 
\xi_n'(\lambda)= &\mu_n'(\lambda)+x'(\lambda)\prod_{j=0}^{n-1}a_{\lambda,j}+x(\lambda)\sum_{j=0}^{n-1}a_{\lambda,j}'\frac{\prod_{k=0}^{n-1}a_{\lambda,k}}{a_{\lambda,j}}+E_n'(\lambda)= \\
=&\prod_{j=0}^{n-1}a_{\lambda,j}\left(x'(\lambda)+x(\lambda)\sum_{j=0}^{n-1}\frac{a_{\lambda,j}'}{a_{\lambda,j}}+ \frac{\mu_n'(\lambda)+E_n'(\lambda)}{\prod_{j=0}^{n-1}a_{\lambda,j}}\right). 
\endaligned\edm
In the following we want to show that $x'(\lambda)$ is the~leading term in the above expression. 

Recall that $\delta''\leq|\xi_n(\lambda)-\mu_n(\lambda)|\leq\delta'$, thus by (\ref{Eestim}) and the~estimate on~$|E_n(\lambda)|$ we have
\beq\label{both} (1-\varepsilon_1)\delta''\leq |x(\lambda)|\prod_{j=0}^{n-1}|a_{\lambda,j}|\leq (1+\varepsilon_1)\delta' \eeq

Now we need to estimate $|\sum\frac{a_{\lambda,j}'}{a_{\lambda,j}}|$. Note that, since $\mu_j(\lambda)=f^j_{\lambda}(\mu_0(\lambda))\in\HS_{\lambda}$, we get that
\bdm |a_{\lambda,j}|=|f_{\lambda}'(\mu_j(\lambda))|\leq \max_{z\in\HS_{\lambda},\lambda\in{B(\lambda_0,r)}}|f_{\lambda}'(z)| \quad\textrm{and}\quad |a_{\lambda,j}|\geq{Ca}\,,\; C,a>0.\edm
Since $a_{\lambda,j}$ are uniformly bounded for every $j$ and $\lambda\in{B(\lambda_0,r)}$, therefore, by Cauchy's formula, also $a_{\lambda,j}'$ are uniformly bounded by some $M'>0$ on a~slightly smaller ball $B(\lambda_0,r')$. We get the following
\bdm \left|\sum_{j=0}^{n-1}\frac{a_{\lambda,j}'}{a_{\lambda,j}}\right|\leq \sum_{j=0}^{n-1}\left|\frac{a_{\lambda,j}'}{a_{\lambda,j}}\right|\leq n\frac{M'}{Ca}=:n\tilde{C}. \edm
Thus, using (\ref{nlog}),
\bdm |x(\lambda)|\left|\sum\frac{a_{\lambda,j}'}{a_{\lambda,j}}\right|\leq |x(\lambda)|n\tilde{C}\leq |x(\lambda)|C'(-\log|x(\lambda)|)\tilde{C}, \edm
where $C'>0$ depends only on $\delta'$. Moreover, up to a~multiplicative constant,
\beq\label{summ1} \frac{-|x(\lambda)|\log|x(\lambda)|}{|x'(\lambda)|}\asymp \frac{-|(\lambda-\lambda_0)^K|\log|\lambda-\lambda_0|}{|(\lambda-\lambda_0)^{K-1}|} \asymp -|\lambda-\lambda_0|\log|\lambda-\lambda_0|. \eeq

Let us estimate
\bdm\aligned
\frac{\xi_n'(\lambda)}{(f^n_{\lambda})'(\mu_0(\lambda))x'(\lambda)}-1=& \frac{\prod{a_{\lambda,j}\left(x'(\lambda)+x(\lambda)\sum\frac{a_{\lambda,j}'}{a_{\lambda,j}}+\frac{\mu_n'(\lambda)+E_n'(\lambda)}{\prod{a_{\lambda,j}}}\right)}} {\prod{a_{\lambda,j}}\,x'(\lambda)}-1=\\
=&\frac{x(\lambda)\sum\frac{a_{\lambda,j}'}{a_{\lambda,j}}}{x'(\lambda)} +\frac{\mu_n'(\lambda)+E_n'(\lambda)}{\prod{a_{\lambda,j}}\,x'(\lambda)}. 
\endaligned\edm
By (\ref{summ1}) the first summand tends uniformly to zero as $\lambda\to\lambda_0$. To see what happens with the second summand note that $|\mu_n'(\lambda)+E_n'(\lambda)|$ is uniformly bounded by Cauchy's formula, since $\mu_n(\lambda)$ and $E_n(\lambda)$ are bounded. We have also seen that $|\prod{a_{\lambda,j}}\,x(\lambda)|$ is bounded (from both sides) independently of $n$. Therefore, by (\ref{both}), we get
\bdm \left|\frac{1}{\prod{a_{\lambda,j}}\,x'(\lambda)}\right|= \left|\frac{1}{\prod{a_{\lambda,j}}\,x(\lambda)}\right|\left|\frac{x(\lambda)}{x'(\lambda)}\right|\leq \frac{1}{\delta''(1-\varepsilon_1)}\left|\frac{x(\lambda)}{x'(\lambda)}\right|\asymp|\lambda-\lambda_0|, \edm
thus also the second summand tends uniformly to zero as $\lambda\to\lambda_0$. This finishes the proof. 
\end{proof}

\vspace{1em}
Binding together Lemma~\ref{mdl} and Lemma~\ref{compare} we obtain the following result.
\begin{corollary}\label{distcor} 
Let $\varepsilon>0$. If $\delta'>0$ is small enough and $0<\delta''<\delta'$, we can find a~radius $r>0$ such that for every $\lambda\in{B(\lambda_0,r)}$ if $|\xi_k(\lambda)-\mu_k(\lambda)|\leq\delta'$ for $k\leq{n}$ and $|\xi_n(\lambda)-\mu_n(\lambda)|\geq\delta''$, then
\bdm \left|\frac{\xi_n'(\lambda)}{(f^n_{\lambda})'(e_{\lambda})\,x'(\lambda)}-1 \right|\leq\varepsilon. \edm 
\end{corollary}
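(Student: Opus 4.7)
The plan is to view Corollary~\ref{distcor} as a direct multiplication of the two preceding estimates. Lemma~\ref{compare} bounds the quantity
$$\frac{\xi_n'(\lambda)}{(f^n_\lambda)'(\mu_0(\lambda))\,x'(\lambda)}$$
near $1$, while the corollary asks for the same quantity with the base point $\mu_0(\lambda)=h_\lambda(e_{\lambda_0})$ (which lies on the moved hyperbolic set $\HS_\lambda$) replaced by the actual critical value $e_\lambda=\xi_0(\lambda)$ of $f_\lambda$. So I would simply factor
$$\frac{\xi_n'(\lambda)}{(f^n_\lambda)'(e_\lambda)\,x'(\lambda)} \;=\; \frac{\xi_n'(\lambda)}{(f^n_\lambda)'(\mu_0(\lambda))\,x'(\lambda)} \;\cdot\; \frac{(f^n_\lambda)'(\mu_0(\lambda))}{(f^n_\lambda)'(e_\lambda)}$$
and control the two factors separately.

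The second factor is governed by inequality~(\ref{dist1}) established in the first step of the proof of Lemma~\ref{mdl}: under the hypothesis $|\xi_k(\lambda)-\mu_k(\lambda)|\leq\delta'$ for all $k\leq n$, which is already assumed in the corollary, that estimate gives
$$\left|\frac{(f^n_\lambda)'(\mu_0(\lambda))}{(f^n_\lambda)'(e_\lambda)}-1\right|\leq \varepsilon_1(\delta'),$$
with $\varepsilon_1(\delta')\to 0$ as $\delta'\to 0$. The first factor is bounded by Lemma~\ref{compare}: under the additional hypothesis $|\xi_n(\lambda)-\mu_n(\lambda)|\geq\delta''$ (also part of the corollary's hypothesis), and after possibly shrinking $r>0$ depending on $\delta''$, that factor lies within some $\varepsilon_2$ of~$1$, which can likewise be made arbitrarily small.

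Multiplying the two bounds via the elementary estimate $|(1+\alpha)(1+\beta)-1|\leq|\alpha|+|\beta|+|\alpha||\beta|$ yields the corollary, provided we first fix $\varepsilon>0$, then choose $\delta'$ small enough so that both $\varepsilon_1(\delta')<\varepsilon/3$ and the hypotheses of Lemma~\ref{compare} can be satisfied, and finally, for the given $0<\delta''<\delta'$, take the radius $r>0$ from Lemma~\ref{compare} so small that $\varepsilon_2<\varepsilon/3$. Since both ingredient estimates are already in hand, the only substantive task is this bookkeeping in the order of quantifiers; no genuinely new argument is required, and the main technical content (the expansion estimate of Lemma~\ref{expansionlemma}, the distortion comparison between critical orbit and its motion, and the transversality bound $|x'(\lambda)|\asymp|\lambda-\lambda_0|^{K-1}$) has all been spent on Lemmas~\ref{mdl} and~\ref{compare} already.
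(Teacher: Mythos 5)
Your proposal is correct and is essentially the paper's own argument: the paper gives no separate proof beyond ``binding together Lemma~\ref{mdl} and Lemma~\ref{compare}'', and the intended combination is exactly your factorization, with the factor $(f^n_\lambda)'(\mu_0(\lambda))/(f^n_\lambda)'(e_\lambda)$ controlled by the estimate (\ref{dist1}) from the first step of the proof of Lemma~\ref{mdl}) (valid since $\xi_0(\lambda)=e_\lambda$) and the other factor by Lemma~\ref{compare}. Your quantifier bookkeeping ($\delta'$ first, then $r$ for the given $\delta''$) matches the paper's ordering, so nothing further is needed.
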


\subsection{Distortion in an annulus}

As we have seen in the previous section, we need to move away from $\lambda_0$ in the parameter ball $B(\lambda_0,r)$ in order to have nice distortion estimates. That is why we will restrict our considerations to an annular domain. This approach will give us a~powerful tool which is bounded distortion of~$\xi_n$ and will lead to the control of the growth of $B(\lambda_0,r)$ under $\xi_n$.

Consider an~annulus in the parameter space:
\bdm A=A(\lambda_0;r_1,r_2)=\{\lambda:r_1<|\lambda-\lambda_0|<r_2\}. \edm
Note that, by (\ref{xform}), for some constant $C\geq1$ and any $\lambda_1,\lambda_2\in{A}$,
\bdm C^{-1}\left(\frac{r_1}{r_2}\right)^{K-1}\leq \left|\frac{x'(\lambda_1)}{x'(\lambda_2)}\right|\leq C\left(\frac{r_2}{r_1}\right)^{K-1}, \edm
where $K$ is the degree of $x(.)$ at $\lambda_0$. Therefore from Corollary~\ref{distcor} and Lemma~\ref{mdl} we conclude that if $r_2>0$ is small enough, then
\bdm \tilde{C}^{-1}\left(\frac{r_1}{r_2}\right)^{K-1}\leq \left|\frac{\xi_n'(\lambda_1)}{\xi_n'(\lambda_2)}\right|\leq \tilde{C}\left(\frac{r_2}{r_1}\right)^{K-1}, \edm
for some $\tilde{C}\geq1$ and all $\lambda_1,\lambda_2\in{A}$, as long as $|\xi_k(\lambda)-\mu_k(\lambda)|\leq\delta'$ for $k\leq{n}$ and $|\xi_n(\lambda)-\mu_n(\lambda)|\geq\delta''$ for all $\lambda\in{A}$.

\begin{lemma}\label{annulus} 
Let $\varepsilon>0$. If $\delta'>0$ and $\frac{\delta''}{\delta'}$ are sufficiently small, $0<\delta''<\delta'$, there exists an~$r>0$ such that for any ball $B=B(\lambda_0,r_2)\subset{B(\lambda_0,r)}$ we have the following. Let $n$ be maximal for which $|\xi_n(\lambda)-\mu_n(\lambda)|\leq\delta'$ for all $\lambda\in{B}$. Let $r_1<r_2$ be minimal such that $|\xi_n(\lambda)-\mu_n(\lambda)|\geq\delta''$ for all $\lambda\in{A}=A(\lambda_0;r_1,r_2)$. Then $\frac{r_1}{r_2}\leq\frac{1}{10}$ and there is some $\delta''<\delta_1'<\delta'$ such that
\bdm A(\mu_n(\lambda_0);\delta''+\varepsilon,\delta_1'-\varepsilon)\subset \xi_n(A)\subset A(\mu_n(\lambda_0);\delta''-\varepsilon,\delta_1'+\varepsilon). \edm
Moreover, $\xi_n$ is at most $K$-to-$1$ on $B$. 
\end{lemma}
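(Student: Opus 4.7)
The plan is to reduce everything to the identity $\xi_n(\lambda)-\mu_n(\lambda)=(f^n_\lambda)'(\mu_0(\lambda))\,x(\lambda)+E_n(\lambda)$ with $|E_n(\lambda)|\leq\varepsilon_1|\xi_n(\lambda)-\mu_n(\lambda)|$ already extracted inside the proof of Lemma~\ref{compare}, and to combine it with the bounded distortion of $(f^n_\lambda)'(\mu_0(\lambda))$ over $\lambda\in B$ furnished by Lemma~\ref{mdl}. Writing $M_n:=|(f^n_{\lambda_0})'(e_{\lambda_0})|$ and using~(\ref{xform}) to replace $|x(\lambda)|$ by a factor arbitrarily close to $|\alpha_K|\,|\lambda-\lambda_0|^K$ once $r_2$ is small, this yields the two-sided comparison
\bdm |\xi_n(\lambda)-\mu_n(\lambda)|\;\asymp\;M_n\,|\alpha_K|\,|\lambda-\lambda_0|^K \edm
uniformly on $B$, with multiplicative constants as close to~$1$ as desired.

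First I would bound $r_1/r_2$. By maximality of $n$ some $\lambda^*\in B$ satisfies $|\xi_{n+1}(\lambda^*)-\mu_{n+1}(\lambda^*)|>\delta'$, and one application of the uniform bound on $|f_\lambda'|$ on~$\NN$ forces $|\xi_n(\lambda^*)-\mu_n(\lambda^*)|$ to be at least a fixed positive fraction of~$\delta'$. By the maximum principle applied to the holomorphic function $\xi_n-\mu_n$ on~$\overline{B}$, this lower bound is realised on the outer circle $|\lambda-\lambda_0|=r_2$, so $M_n|\alpha_K|r_2^K\gtrsim\delta'$. On the other hand the definition of~$r_1$ gives a point on $|\lambda-\lambda_0|=r_1$ with $|\xi_n-\mu_n|=\delta''$, whence $M_n|\alpha_K|r_1^K\asymp\delta''$. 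Dividing gives $(r_1/r_2)^K\lesssim\delta''/\delta'$, which is $\leq(1/10)^K$ once $\delta''/\delta'$ is chosen small enough.

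For the image of $A$, I rewrite
\bdm \xi_n(\lambda)-\mu_n(\lambda_0)=\bigl(\mu_n(\lambda)-\mu_n(\lambda_0)\bigr)+(f^n_\lambda)'(\mu_0(\lambda))\,x(\lambda)+E_n(\lambda), \edm
and set $\delta_1':=M_n|\alpha_K|r_2^K$, which by the two preceding paragraphs lies in $(\delta'',\delta')$. The first summand on the right is $o(1)$ uniformly in~$n$ (since $h_\lambda\to\mathrm{id}$ as $\lambda\to\lambda_0$ through the holomorphic motion) and the third is bounded by $\varepsilon_1\delta'$, while the middle dominant term has modulus sweeping between $\asymp\delta''$ and $\asymp\delta_1'$ as $\lambda$ runs through~$A$. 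This immediately gives the outer inclusion $\xi_n(A)\subset A(\mu_n(\lambda_0);\delta''-\varepsilon,\delta_1'+\varepsilon)$. The opposite inclusion follows from the argument principle on each circle $|\lambda-\lambda_0|=\rho\in(r_1,r_2)$: since $x$ has a zero of exact order~$K$ at~$\lambda_0$ and $(f^n_\lambda)'(\mu_0(\lambda))$ is non-vanishing, the middle term winds $K$ times around~$0$, and the two perturbations are dominated by the leading term on targets sitting well inside its image, so the total winding of $\xi_n-\mu_n(\lambda_0)$ is also~$K$ and $A(\mu_n(\lambda_0);\delta''+\varepsilon,\delta_1'-\varepsilon)\subset\xi_n(A)$ follows.

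The same winding-number count on $|\lambda-\lambda_0|=r_2$ yields that $\xi_n\colon B\to\xi_n(B)$ has degree exactly~$K$; together with Corollary~\ref{distcor}, which forces $\xi_n'$ to vanish only where $x'$ does, and the fact that $x'$ has a unique zero of order $K-1$ at~$\lambda_0$ for $r_2$ small, the only branching of $\xi_n$ on~$B$ sits at~$\lambda_0$, so $\xi_n|_B$ is a branched cover of degree~$K$, in particular at most $K$-to-$1$. The main technical obstacle is precisely the covering half of the image inclusion: the distortion estimates deliver only size control, so one must verify that neither the translation $\mu_n(\lambda)-\mu_n(\lambda_0)$ nor the error $E_n(\lambda)$ can cancel the winding of the leading term along circles close to $|\lambda-\lambda_0|=r_1$, where the leading term is itself only of order~$\delta''$; this is exactly what the smallness assumption on $\delta''/\delta'$ is used to arrange.
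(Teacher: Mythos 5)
Your argument is correct and follows essentially the same route as the paper's proof: the linearization $\xi_n-\mu_n\approx(f^n_\lambda)'\,x(\lambda)$ with relative error control, Lemma~\ref{mdl} to treat $(f^n_\lambda)'$ as essentially constant on~$B$, the order-$K$ zero of $x$ at~$\lambda_0$ for the winding count and near-roundness, and maximality of~$n$ together with $M'=\max_{\NN}|f_\lambda'|$ to force $(r_1/r_2)^K\lesssim\delta''/\delta'$. The only (harmless) overreach is invoking Corollary~\ref{distcor} to locate the zeros of $\xi_n'$ inside the inner disc, where its hypothesis $|\xi_n-\mu_n|\geq\delta''$ need not hold; but the at-most-$K$-to-$1$ statement already follows from your winding-number count on $|\lambda-\lambda_0|=r_2$, which is exactly the paper's argument.
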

\begin{proof}
Note that a~parameter circle $\gamma_r=\{\lambda:|\lambda-\lambda_0|=r\}$, for small $r>0$, is mapped under $x(.)$ onto a~curve that encircles $\lambda_0$ $K$-times so that $x(\gamma_r)$ is close to a circle of radius $\alpha_Kr^K$. Moreover, $|\mu_n(\lambda)-\mu_n(\lambda_0)|=|h_{\lambda}(f^n_{\lambda_0}(e_{\lambda_0}))-f^n_{\lambda_0}(e_{\lambda_0})|$ is arbitrarily small for small radii in the~parameter space, since $\HS$ and $\HS_{\lambda}$ can be very close to each other for $\lambda\in{B(\lambda_0,r)}$. Thus, if $r$ is small and $|\xi_n(\lambda)-\mu_n(\lambda)|\geq\delta''$, then
\beq\label{P} |\xi_n(\lambda)-\mu_n(\lambda)|>P|\mu_n(\lambda)-\mu_n(\lambda_0)| \eeq
for some big $P\gg1$ depending only on $\delta''$ and $r$. Arguing again like in the proof of Lemma~\ref{compare}, we get that for every $\varepsilon_1>0$ we can choose $\delta'>0$ and $r>0$ so that 
\beq\label{pom}  |\xi_n(\lambda)-\mu_n(\lambda)-(f^n_{\lambda})'(e_{\lambda})x(\lambda)|<\varepsilon_1|\xi_n(\lambda)-\mu_n(\lambda)| \eeq
for all $\lambda\in{B(\lambda_0,r)}$. 

If $r_1$ is minimal so that $|\xi_n(\lambda)-\mu_n(\lambda)|\geq\delta''$ for all $\lambda\in{A(\lambda_0;r_1,r_2)}$, then for some $\lambda_1$ with $|\lambda_1-\lambda_0|=r_1$ we have 
\beq\label{lambda1} |\xi_n(\lambda_1)-\mu_n(\lambda_1)|=\delta''.\eeq 
On the other hand, from the definition of $n$, we have for some $\lambda_2$ with $|\lambda_2-\lambda_0|=r_2$ that $|\xi_{n+1}(\lambda_2)-\mu_{n+1}(\lambda_2)|\geq\delta'$. But
\bdm |\xi_{n+1}(\lambda_2)-\mu_{n+1}(\lambda_2)|=|f_{\lambda_2}(\xi_n(\lambda_2))-f_{\lambda_2}(\mu_n(\lambda_2))|\leq M'|\xi_n(\lambda_2)-\mu_n(\lambda_2)|, \edm
where $M'=\max\{|f_{\lambda}'(z)|:z\in\NN,\lambda\in{B(\lambda_0,r)}\}$ which is finite since $\NN$ contains neither poles nor essential singularities of~$f_{\lambda}$. Therefore we get that 
\beq\label{lambda2} |\xi_n(\lambda_2)-\mu_n(\lambda_2)|\geq\frac{\delta'}{M'}.\eeq 
Moreover, by (\ref{pom}), for every $\lambda\in{B(\lambda_0,r)}$, if $r>0$ and $\delta'>0$ were small enough, then
\beq\label{pompom} \frac{1}{1+\varepsilon_1}|(f^n_{\lambda})'(e_{\lambda})x(\lambda)|\leq |\xi_n(\lambda)-\mu_n(\lambda)|\leq \frac{1}{1-\varepsilon_1}|(f^n_{\lambda})'(e_{\lambda})x(\lambda)|. \eeq
Using (\ref{lambda1}), (\ref{lambda2}), (\ref{pompom}) and Lemma~\ref{mdl} we can estimate as follows 
\bdm \frac{\delta'}{\delta''}\leq \frac{M'|\xi_n(\lambda_2)-\mu_n(\lambda_2)|}{|\xi_n(\lambda_1)-\mu_n(\lambda_1)|}\leq M'\frac{1+\varepsilon_1}{1-\varepsilon_1}\left|\frac{(f^n_{\lambda_2})'(e_{\lambda_2})x(\lambda_2)}{(f^n_{\lambda_1})'(e_{\lambda_1})x(\lambda_1)}\right|\leq M'\frac{(1+\varepsilon_1)^2}{1-\varepsilon_1}\left|\frac{x(\lambda_2)}{x(\lambda_1)}\right|. \edm
Thus we can choose $\delta''>0$ so small that $\frac{r_1}{r_2}\leq\frac{1}{10}$ independently of $n$.

Now we want to see how many times $\xi_n(\lambda)-\mu_n(\lambda)$ orbits around $0$, as the parameter $\lambda$ moves along the~circle $\gamma_r$, $r>r_1$. To see this let us look at the expression $\frac{\xi_n(\lambda)-\mu_n(\lambda)}{|\xi_n(\lambda)-\mu_n(\lambda)|}$. But by (\ref{pom}) we have that
\bdm \left|\frac{\xi_n(\lambda)-\mu_n(\lambda)}{|\xi_n(\lambda)-\mu_n(\lambda)|}- \frac{(f^n_{\lambda})'(e_{\lambda})x(\lambda)}{|\xi_n(\lambda)-\mu_n(\lambda)|}\right|\leq\varepsilon_1, \edm
so it is the same to ask how many times $(f^n_{\lambda})'(e_{\lambda})x(\lambda)$ encircles $0$. By Lemma~\ref{mdl}, $(f^n_{\lambda})'(e_{\lambda})$ is essentially constant on $B(\lambda_0,r_2)$, so the number we are looking for is $K$, the same as for $x(\lambda)$ only. Further, recall after (\ref{P}) that $|\mu_n(\lambda)-\mu_n(\lambda_0)|$ is much smaller than $|\xi_n(\lambda)-\mu_n(\lambda)|$. This means that $\xi_n(\lambda)$ orbits around $\mu_n(\lambda_0)=\xi_n(\lambda_0)$ also $K$ times close to some circle centered at~$\mu_n(\lambda_0)$. By the Argument Principle, the degree of~$\xi_n$ is at most~$K$.

In order to prove that the shape of the considered set is really close to round let us take $\lambda_1,\lambda_2$ with $|\lambda_1-\lambda_0|=|\lambda_2-\lambda_0|=r$. Then again by (\ref{pompom}) and Lemma~\ref{mdl} we obtain the following estimates
\bdm\aligned
\left|\frac{\xi_n(\lambda_1)-\mu_n(\lambda_0)}{\xi_n(\lambda_2)-\mu_n(\lambda_0)}\right|\leq& \frac{1+\varepsilon}{1-\varepsilon}\left|\frac{\xi_n(\lambda_1)-\mu_n(\lambda_1)}{\xi_n(\lambda_2)-\mu_n(\lambda_2)}\right|\leq \frac{(1+\varepsilon)^2}{(1-\varepsilon)^2}\left|\frac{(f^n_{\lambda_1})'(e_{\lambda_1})x(\lambda_1)}{(f^n_{\lambda_2})'(e_{\lambda_2})x(\lambda_2)}\right|\leq\\ 
\leq&\frac{(1+\varepsilon)^3}{(1-\varepsilon)^2}\left|\frac{(f^n_{\lambda_2})'(e_{\lambda_2})x(\lambda_1)}{(f^n_{\lambda_2})'(e_{\lambda_2})x(\lambda_2)}\right|= \frac{(1+\varepsilon)^3}{(1-\varepsilon)^2}\left|\frac{x(\lambda_1)}{x(\lambda_2)}\right|.
\endaligned \edm
The last expression can be arbitrarily close to~$1$ independently of $n$ for small $r$. This means that the set $\xi_n(\gamma_r)$ is close to a~circle centered at $\xi_n(\lambda_0)=\mu_n(\lambda_0)$ and of radius $|\xi_n(\lambda)-\mu_n(\lambda_0)|$ for any $|\lambda-\lambda_0|=r$, so the annulus $A$ is mapped onto a~slightly distorted annulus whose shape can be controlled independently of~$n$. This finishes the proof of the lemma. 
\end{proof}

With the notation of the previous lemma, we obtain from its proof and Lemma~\ref{mdl} the following important corollary.
\begin{corollary}\label{ballin} If $n$ is maximal for which $|\xi_n(\lambda)-\mu_n(\lambda)|\leq\delta'$, $\lambda\in{B(\lambda_0,r_2)}$, then for all $\lambda$ with $|\lambda-\lambda_0|=r_2$ we have $|\xi_n(\lambda)-\mu_n(\lambda)|\geq\frac{\delta'}{2M'}$, if $\delta'>0$ and $r>0$ were chosen small enough. \end{corollary}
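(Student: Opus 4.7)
The plan is to observe that Corollary~\ref{ballin} is essentially a repackaging of estimates already assembled inside the proof of Lemma~\ref{annulus}: on the outer boundary circle the quantity $|\xi_n(\lambda)-\mu_n(\lambda)|$ is nearly constant, and by the maximality of $n$ it is large at at least one point of that circle. Combining these two facts yields the uniform lower bound on the whole circle. So I would structure the argument in three short steps.

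First, I would extract a single boundary point where the separation is already known to be large. By maximality of $n$, there exists some $\lambda_2$ with $|\lambda_2-\lambda_0|=r_2$ such that $|\xi_{n+1}(\lambda_2)-\mu_{n+1}(\lambda_2)|\geq\delta'$. Applying $f_{\lambda_2}$ once and using the uniform bound $|f_{\lambda}'|\leq M'$ on $\NN$ exactly as in inequality (\ref{lambda2}) from the proof of Lemma~\ref{annulus}, I conclude $|\xi_n(\lambda_2)-\mu_n(\lambda_2)|\geq \delta'/M'$.

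Second, for an arbitrary $\lambda$ on the boundary circle $|\lambda-\lambda_0|=r_2$, I would compare $|\xi_n(\lambda)-\mu_n(\lambda)|$ with $|\xi_n(\lambda_2)-\mu_n(\lambda_2)|$ via three successive replacements. By (\ref{pompom}) both quantities are comparable (up to a factor $1\pm\varepsilon_1$) to $|(f^n_{\lambda})'(e_{\lambda})\,x(\lambda)|$ and $|(f^n_{\lambda_2})'(e_{\lambda_2})\,x(\lambda_2)|$ respectively; Lemma~\ref{mdl} shows that the space derivatives $(f^n_{\lambda})'(e_{\lambda})$ are essentially constant across $B(\lambda_0,r_2)$, reducing the comparison to $|x(\lambda)|$ versus $|x(\lambda_2)|$; and finally the expansion $x(\lambda)=\alpha_K(\lambda-\lambda_0)^K+\cdots$ from (\ref{xform}) gives $|x(\lambda)|/|x(\lambda_2)|=1+O(r_2)$ on a fixed circle. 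Chaining these bounds produces
\[
|\xi_n(\lambda)-\mu_n(\lambda)|\geq \frac{(1-\varepsilon_1)^2}{(1+\varepsilon_1)^3}\bigl(1+O(r_2)\bigr)\,|\xi_n(\lambda_2)-\mu_n(\lambda_2)|\geq \frac{\delta'}{2M'},
\]
provided $r$ (hence $r_2$) and the associated perturbative parameters were chosen small enough at the outset.

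The only potential obstacle is that the errors coming from Lemma~\ref{mdl} and (\ref{pompom}) must be uniform in the possibly very large iterate number $n$, but this has already been taken care of in Section~\ref{secholmot} via the key logarithmic bound $n\leq -C\log|x(\lambda)|$ from (\ref{nlog}); no new technical work is needed beyond reading off this uniformity. In this sense the corollary really is just a clean boundary version of what was proved en route to Lemma~\ref{annulus}.
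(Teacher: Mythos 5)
Your proposal is correct and follows essentially the same route as the paper, which derives Corollary~\ref{ballin} directly from the proof of Lemma~\ref{annulus} (the boundary point estimate giving $|\xi_n(\lambda_2)-\mu_n(\lambda_2)|\geq\delta'/M'$) combined with Lemma~\ref{mdl}, (\ref{pompom}) and the near-constancy of $|x(\lambda)|$ on the circle $|\lambda-\lambda_0|=r_2$ coming from (\ref{xform}). Nothing essential is missing.
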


\subsection{Measure estimates}\label{secmeasest}

By now we know how to control the behaviour of $\xi_n$ in a~small scale. In this section we will derive measure estimates in a~large scale, i.e. when a~parametric ball attains under~$\xi_n$ some fixed size. 
Recall that we consider $f_{\lambda}$, $\lambda\in{B(\lambda_0,\varepsilon)}$, for some small $\varepsilon>0$ and $\lambda_0$ is the~parameter satisfying assumptions of Theorem~\ref{A}. Assuming that $r\leq\varepsilon$ is so small that $z$ and its holomorphic motion $h_{\lambda}(z)$ are close enough for all $z\in\HS$ and $\lambda\in{B(\lambda_0,r)}$, we get from Lemma~\ref{annulus} and Corollary~\ref{ballin} the~following fact.
\begin{proposition}\label{elipdistprop} 
There exist $\delta'>0$ and $0<r<\varepsilon$, depending only on~$f_{\lambda_0}$, such that for any $0<r_2<r$, if $n$~is the~biggest number for which $\diam(\xi_n(B(\lambda_0,r_2)))\leq\delta'$, then we can find two discs $D_1$ i $D_2$ such that $D_1\subset{D}\subset{D_2}$, where $D=\xi_n(B(\lambda_0,r_2))$, with the~following properties:
$$ \frac{\diam(D_2)}{\diam(D_1)}=4M' \,,\; \diam(D_1)=\frac{\delta'}{M'} $$
and $D_1$ is centered at~$\mu_n(\lambda_0)\in{J(f_{\lambda_0})}$. The~degree of~$\xi_n$ on~$B(\lambda_0,r)$ is bounded above by~$K$, depending only on the~family~$f_{\lambda}$, $\lambda\in{B(\lambda_0,\varepsilon)}$.
\end{proposition}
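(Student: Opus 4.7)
My plan is to deduce Proposition~\ref{elipdistprop} by bundling together Lemma~\ref{annulus} and Corollary~\ref{ballin}; the only substantive work is converting the ``moving centre'' estimates $|\xi_n(\lambda)-\mu_n(\lambda)|$ into ``fixed centre'' estimates around $\mu_n(\lambda_0)$. First I would note that $\xi_n(\lambda_0)=f^n_{\lambda_0}(e_{\lambda_0})=\mu_n(\lambda_0)\in D\cap J(f_{\lambda_0})$, and that since $\mu_n(\lambda)=h_{\lambda}(f^n_{\lambda_0}(e_{\lambda_0}))$ with the holomorphic motion $h_{\lambda}$ uniformly close to the identity on the compact set $\HS$ when $r$ is small, the quantity $\sup_{n}|\mu_n(\lambda)-\mu_n(\lambda_0)|$ can be made arbitrarily small uniformly in $n$. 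This is the mechanism that will let me transfer any estimate centred at the moving point $\mu_n(\lambda)$ into one centred at the fixed point $\mu_n(\lambda_0)$.

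For the outer inclusion $D\subset D_2$, the maximality of $n$ together with $\mu_n(\lambda_0)\in D$ forces $D\subset B(\mu_n(\lambda_0),\delta')$; taking $D_2:=B(\mu_n(\lambda_0),2\delta')$ then gives $\diam(D_2)=4\delta'=4M'\cdot\diam(D_1)$ with room to spare.

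For the inner inclusion $D_1\subset D$, I would invoke Corollary~\ref{ballin} to get $|\xi_n(\lambda)-\mu_n(\lambda)|\geq\delta'/(2M')$ for every $\lambda$ on $\partial B(\lambda_0,r_2)$; combined with the previous observation this yields a lower bound of the form $|\xi_n(\lambda)-\mu_n(\lambda_0)|\geq\delta'/(2M')-o(1)$ on this circle, uniformly in $n$. Lemma~\ref{annulus} then tells me that as $\lambda$ traverses this circle its image winds $K$ times along a near-circular curve around $\mu_n(\lambda_0)$, so by the argument principle $D=\xi_n(B(\lambda_0,r_2))$ engulfs everything enclosed by this curve; in particular it contains $D_1:=B(\mu_n(\lambda_0),\delta'/(2M'))$, which has $\diam(D_1)=\delta'/M'$ as required. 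If necessary I absorb the $o(1)$ loss by shrinking $\delta'$ and $r$ once more, which costs nothing because both constants are still permitted to depend only on $f_{\lambda_0}$. The degree bound is then verbatim the final clause of Lemma~\ref{annulus}.

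The one delicate point \emph{is the uniformity in $n$} of the passage from $\mu_n(\lambda)$ to the fixed $\mu_n(\lambda_0)$: the preceding lemma and corollary are already uniform in $n$, but only with respect to the floating base point. This is handled by the uniform continuity of the holomorphic motion on the compact invariant set $\HS$, which permits a single shrinking of $r$ that renders the $\mu_n(\lambda)$-to-$\mu_n(\lambda_0)$ correction negligible compared with $\delta'/M'$ for all $n$ simultaneously.
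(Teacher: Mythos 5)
Your proposal is correct and follows essentially the same route as the paper, which likewise obtains Proposition~\ref{elipdistprop} directly from Lemma~\ref{annulus} and Corollary~\ref{ballin} together with the observation that $h_{\lambda}$ is uniformly close to the identity on the compact set $\HS$, so that estimates at the moving point $\mu_n(\lambda)$ transfer to the fixed centre $\mu_n(\lambda_0)$. The small slack you flag (the $o(1)$ loss against the exact radii $\delta'/(2M')$ and the two slightly different maximality conditions defining $n$) is treated at the same level of precision in the paper itself, so your handling by shrinking $r$ and adjusting constants is consistent with it.
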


The~next step is to estimate the~Lebesgue measure of those parameters~$\lambda$ for which some iterate $f_{\lambda}^n(e_{\lambda})$ either turns back to a~neighbourhood of a~critical point or escapes close to infinity. First, however, we need to know how many iterates are required to cover a~neighbourhood of infinity and critical points
\begin{equation}\label{udelta}  
	U_{\delta}={B\big(\Crit(f_{\lambda_0}),\delta\big)}\cup{B\left(\infty,\delta\right)},
\end{equation}
for an~arbitrary small $\delta>0$. To be precise, we want to estimate the~number of iterates of~$f_{\lambda}$, $\lambda\in{B(\lambda_0,r)}$ for some $r>0$, after which the image of a~small disk intersecting the Julia set covers $U_{\delta}$.

Recall that the Julia set $J(f_{\lambda})$ is the~closure of prepoles of~$f_{\lambda}$ (see e.g.~\cite{berg}), thus any open disc intersecting the Julia set after finite number of steps will cover under~$f_{\lambda}$ the whole~$\Cd$ (elliptic functions have no omitted values). Moreover, since poles move holomorphically with the~parameter~$\lambda$, the~number of steps is locally constant in the~parameter plane.

\begin{lemma}\label{freeperiod} 
Let $D$ be an~open and bounded set disjoint from $U_{\delta}$ containing an~open disk of radius $d>0$ centered at the~Julia set of some $f=f_{\lambda}$. Then we can choose an~$N$, depending only on $d$, $f$ and $U_[\delta]$, such that
\bdm \inf\left\{m\in\N: f^m(D)\supset\overline{U_{\delta}}\right\}\leq{N}. \edm 
\end{lemma}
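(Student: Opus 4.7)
The plan is to combine the density of prepoles in $J(f)$ with the $\Lambda$-periodicity of $f=f_\lambda$ in order to reduce a uniform covering estimate to a compactness argument on a bounded fundamental parallelogram. Fix such a bounded parallelogram $P$ for $\Lambda$. Because $f$ is $\Lambda$-periodic, the Julia set $J(f)$ is $\Lambda$-invariant, and for every $\omega\in\Lambda$ and every $m\geq 1$ one has $f^m(B(z,r))=f^m(B(z-\omega,r))$. Hence any ball $B(z_0,d)\subset D$ centered on $J(f)$ can be translated to have its center in the compact set $J(f)\cap\overline{P}$ without changing its forward iterates from step one onwards.

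The core local claim is that for every $w\in J(f)$ and every $\rho>0$ there is a \emph{single} iterate $m=m(w,\rho)$ with $f^m(B(w,\rho))\supset\overline{U_\delta}$. Indeed, since $J(f)$ equals the closure of the prepoles of $f$ (see~\cite{berg}), the disc $B(w,\rho)$ contains a prepole of some order $k$, so by openness of $f^k$ the image $f^k(B(w,\rho))$ is a neighbourhood of $\infty$ in $\Cd$ and therefore contains $\{|z|>R\}$ for some $R>0$. For $R$ sufficiently large this exterior contains a full translate $P+\omega_0$ of the fundamental parallelogram, and $f$ maps every fundamental parallelogram onto all of $\Cd$ (elliptic functions being surjective on each period cell). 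Consequently $f^{k+1}(B(w,\rho))=\Cd\supset\overline{U_\delta}$, and one may take $m(w,\rho):=k+1$.

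To make this uniform, cover the compact set $J(f)\cap\overline{P}$ by finitely many discs $B(w_j,d/4)$, $j=1,\dots,J$, and apply the previous paragraph to each $B(w_j,d/2)$ to obtain integers $N_j$ with $f^{N_j}(B(w_j,d/2))\supset\overline{U_\delta}$. Put $N:=\max_j N_j$, which depends only on $d$, $f$ and $U_\delta$. Given any admissible $D$ with $B(z_0,d)\subset D$, choose $\omega\in\Lambda$ so that $z_0-\omega\in\overline{P}$; then $z_0-\omega\in B(w_j,d/4)$ for some $j$, and the triangle inequality gives $B(w_j,d/2)\subset B(z_0-\omega,d)$. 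Using $\Lambda$-periodicity of $f$,
\[
    f^{N_j}(D)\supset f^{N_j}(B(z_0,d))=f^{N_j}(B(z_0-\omega,d))\supset f^{N_j}(B(w_j,d/2))\supset\overline{U_\delta},
\]
so $\inf\{m:f^m(D)\supset\overline{U_\delta}\}\leq N_j\leq N$.

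The main obstacle, which distinguishes the elliptic case from the rational or exponential setting, is the passage from \emph{the union} of forward images of a disc covering $\overline{U_\delta}$ (which would follow quickly from Montel together with the absence of omitted values) to \emph{a single} forward image covering $\overline{U_\delta}$. The argument above sidesteps delicate Picard-type estimates near the essential singularity at $\infty$ by exploiting the lattice symmetry: once some $f^k(B)$ reaches a neighbourhood of $\infty$, it automatically contains a full period cell, and one further application of $f$ is already surjective onto $\Cd$.
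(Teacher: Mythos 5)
Your proof is correct, and its core mechanism is the one the paper relies on: density of prepoles in $J(f)$ gives, for any disc centred on the Julia set, a \emph{single} iterate whose image is all of $\Cd$ (prepole $\to$ neighbourhood of $\infty$ $\to$ a full period parallelogram $\to$ $\Cd$), and a compactness argument upgrades the pointwise bound to a uniform $N$. Where you genuinely differ is in how uniformity is obtained: the paper covers the compact set $\overline{J(f)\setminus U_{\delta}}$ (compact precisely because $U_{\delta}$ contains a spherical neighbourhood of $\infty$) by discs of diameter $d$ centred on it and notes that the minimal covering time is locally bounded, whereas you exploit $\Lambda$-periodicity of $f$ and the $\Lambda$-invariance of $J(f)$ to translate the centre of the given disc into one closed fundamental parallelogram $\overline{P}$ and run the compactness argument on $J(f)\cap\overline{P}$. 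Your reduction buys a bound that is uniform over all of $J(f)$ (the covering time does not use the hypothesis that $D$ avoids $U_{\delta}$), and it also spells out the single-iterate covering step, which the paper only asserts in the sentence preceding the lemma (``any open disc intersecting the Julia set after finite number of steps will cover $\Cd$''); the paper's version avoids invoking periodicity and is shorter. One small caveat: the paper takes balls in the spherical metric, and spherical discs are not translation invariant, so the identity $f^{m}(B(z_0,d))=f^{m}(B(z_0-\omega,d))$ should be applied to Euclidean discs; since $D$ is disjoint from $B(\infty,\delta)$ and hence lies in a bounded region depending only on $\delta$, a spherical disc of radius $d$ centred there contains a Euclidean disc of radius $c(\delta)d$, and running your argument with that Euclidean radius is harmless because $N$ is allowed to depend on $U_{\delta}$.
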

\begin{proof}
Cover $\overline{J(f)\setminus{U_{\delta}}}$ with a~collection of open disks $D_z$ of diameter $d$ centered at $z\in\overline{J(f)\setminus{U_{\delta}}}$. Since the prepoles of~$f$ are dense in~$J(f)$, for every $D_z$ there is a~minimal $n=n(z)$ such that 
\bdm f^n(D_z)\supset\overline{U_{\delta}}. \edm
But $n(z)$ is constant in some neighbourhood of $z$ since $f^n$ is continuous, moreover $\overline{J(f)\setminus{U_{\delta}}}$ is compact in~$\C$, therefore we can find an~integer~$N$ such that $n(z)\leq{N}$ for every~$z$.
\end{proof}

Note that we can choose a~radius $r>0$ so that the~statement holds for every $f_{\lambda}$, $\lambda\in{B(\lambda_0,r)}$ and possibly slightly bigger $N$, which depends only on $d>0$ for $r$ small enough. It is possible since the dependence on $\lambda$ is analytic hence continuous.

We know now that $f^m(D)\Supset{U_{\delta}}$ for some $m\leq{N}$. We will estimate the~measure of those points from $D$ that get mapped into $U_{\delta}$ under~$f^j$ for some $j\leq{m}$. Recall that $f=f_{\lambda}$ is a~Weierstrass elliptic function and $D$ is an~open and bounded set disjoint from $U_{\delta}$. In particular $D\cap{B(\infty,\delta)}=\emptyset$. The following lemma is similar to an~analogous one in the~rational case (cf. {\cite[Lemma~4.2]{a}}) and for the~exponential family \cite{b}, however because of the presence of poles we need to be much more careful. Let $\mu$ denotes the~Lebesgue measure on the~Riemann sphere~$\Cd$ and recall that the~derivatives are spherical and $U_{\delta}$ is given by (\ref{udelta}).

\begin{lemma}\label{measest} 
Assume that $D$ is an~open set disjoint from $U_{\delta}$ and $f^m(D)\Supset{U}_{\delta}$ for some integer $m$. Then there exists a~constant $C>0$, depending only on $f$, $m$ and~$U_{\delta}$, such that
$$ \mu\left(\left\{z\in{D}\colon f^j(z)\in{U_{\delta}}\textrm{ for some } 1\leq{j}\leq{m}\right\}\right)\geq C\mu(D). $$
\end{lemma}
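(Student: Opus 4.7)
The plan is to apply the spherical change-of-variables formula to the meromorphic iterate~$f^m$, exploiting that the spherical derivative~$|f'|$ of a Weierstrass elliptic function is bounded on the whole plane. Indeed, $|f'|$ is continuous on~$\C$ (it extends continuously through each pole, vanishing there because $\wp$ has double poles) and $\Lambda$-periodic, so it attains its supremum $M:=\sup_{z\in\C}|f'(z)|<\infty$ on a compact fundamental parallelogram. By the chain rule $|(f^j)'(z)|=\prod_{k=0}^{j-1}|f'(f^k(z))|$, so in particular $|(f^m)'(z)|\leq M^m$ off the discrete set of prepoles of~$f$ of order strictly less than~$m$.

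The set $E:=(f^m)^{-1}(U_{\delta})\cap{D}$ is contained in $\{z\in{D}\colon f^j(z)\in{U_{\delta}} \text{ for some }1\leq j\leq m\}$, so it suffices to bound $\mu(E)$ from below. The hypothesis $f^m(D)\Supset{U_{\delta}}$ guarantees that every $w\in U_{\delta}$ has at least one preimage in~$D$, which automatically lies in~$E$. Applying the area formula to the meromorphic map $f^m$ on the open set~$E$, after removing the measure-zero set of critical points and prepoles of order at most~$m$, I would then estimate
\bdm  \mu(U_{\delta})\leq\int_{\C}\#\bigl((f^m)^{-1}(w)\cap{E}\bigr)\,d\mu(w) =\int_E|(f^m)'(z)|^2\,d\mu(z)\leq M^{2m}\mu(E),  \edm
whence $\mu(E)\geq\mu(U_{\delta})/M^{2m}$.

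To convert this absolute lower bound into the required relative estimate $\mu(E)\geq C\mu(D)$, I would use the trivial but decisive observation that $\mu(D)\leq\mu(\Cd)=4\pi$, so that setting $C:=\mu(U_{\delta})/(4\pi M^{2m})$ completes the argument. The constant~$C$ depends only on~$f$, $m$ and~$U_{\delta}$, as required, and manifestly not on~$D$ itself.

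The principal obstacle -- the reason the authors stress that ``because of the presence of poles we need to be much more careful'' -- is the legitimacy of the change-of-variables step for the meromorphic iterate~$f^m$ on an open set~$D$ which may approach the poles arbitrarily closely. The spherical framework is precisely what makes this go through: the spherical Jacobian~$|f'|^2$ extends to a continuous, bounded density on all of~$\C$, while the critical points of~$f^m$ and the prepoles of order at most~$m$ inside~$D$ form a discrete, hence null, exceptional set which can be discarded without affecting either side of the inequality. The crucial input, which replaces the compactness argument available in the rational case, is the global bound on~$|f'|$ coming from $\Lambda$-periodicity.
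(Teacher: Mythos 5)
The decisive step in your argument --- the uniform bound $|(f^m)'(z)|\le M^m$ on $E=(f^m)^{-1}(U_{\delta})\cap D$ --- is false, and it fails precisely because of the poles, i.e.\ at the very point where the paper warns that extra care is needed. The quantity that is continuous, $\Lambda$-periodic and bounded on $\C$ is the plane-to-sphere derivative $f^{\#}(z)=|f'(z)|/(1+|f(z)|^2)$; but the Jacobian required in your change-of-variables formula, in which $\mu$ is the spherical measure on the \emph{domain} as well (you use $\mu(D)\le\mu(\Cd)$), is the sphere-to-sphere derivative $|f'(z)|\,(1+|z|^2)/(1+|f(z)|^2)$. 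That quantity is neither $\Lambda$-periodic nor bounded: along lattice translates of a fixed non-critical, non-pole point it grows like $|z|^2$. Moreover, the chain rule $\prod_{k=0}^{m-1}$ applies to the sphere-to-sphere (or euclidean) derivative, not to $f^{\#}$, and the factor at $f^k(z)$ blows up exactly when the intermediate iterate $f^k(z)$ is large, i.e.\ when the orbit has just passed near a pole. Membership in $E$ constrains the orbit only at time $m$: nothing prevents $f^k(z)\in B(\infty,\delta)$ for some $k<m$. Indeed $f^m$ has essential singularities at the prepoles of order $<m$, and in the situation where the lemma is applied $D$ contains a disc centred at a point of the Julia set, hence contains such prepoles; $E$ accumulates on them and $|(f^m)'|$ is unbounded on $E$ under any interpretation of the spherical derivative. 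Consequently the inequality $\int_E|(f^m)'|^2\,d\mu\le M^{2m}\mu(E)$ is unjustified, and the lower bound on $\mu(E)$ collapses. (Your other shortcut --- converting an absolute bound into a relative one via $\mu(D)\le\mu(\Cd)$ --- is harmless; the gap is entirely in the derivative estimate.)

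This is exactly the difficulty the paper's proof is structured to avoid. It partitions the set $F$ of points entering $U_{\delta}$ by the time of \emph{first} entry, $F=F_1\cup\dots\cup F_m$: for $z\in F_j$ the points $z,f(z),\dots,f^{j-1}(z)$ all lie in the bounded set $\Cd\setminus U_{\delta}$, so no intermediate iterate is near $\infty$ and one does get a uniform upper bound $c_j$ for $|(f^j)'|$ on $F_j$, whence $\mu(U_{\delta})\le C_1\mu(F)$. The complementary estimate bounds $\mu(D\setminus F)$ from above using the degree bound $(2n_{\delta})^m$ (finitely many period parallelograms meet $\Cd\setminus U_{\delta}$) together with a lower bound for $|(f^m)'|$ on $D\setminus F$, and combining the two yields $\mu(F)\ge C\mu(D)$. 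To repair your argument you would need to replace the global $M^{2m}$ bound by derivative control along orbit segments that stay out of $B(\infty,\delta)$ --- which is essentially the first-entry decomposition --- since the hypothesis $f^m(D)\Supset U_{\delta}$ gives you no freedom to select, for each $w\in U_{\delta}$, a preimage whose intermediate orbit avoids the poles.
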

\begin{proof}
Let us define
\bdm F=\{z\in{D}: f^j(z)\in{U_{\delta}}\textrm{ for some } 1\leq{j}\leq{m}\} \edm
Divide $F$ into $m$ pairwise disjoint subsets, i.e. domains of the first entry map to $U_{\delta}$:
\bdm\aligned
F_1&=\{z\in{D}:f(z)\in{U_{\delta}}\}=f^{-1}(U_{\delta})\cap{D},\\
F_2&=\{z\in{D}:f^2(z)\in{U_{\delta}}\textrm{ but }f(z)\notin{U_{\delta}}\}=f^{-2}(U_{\delta})\cap{f^{-1}\left(\Cd\setminus{U_{\delta}}\right)}\cap{D},\\
F_3&=\{z\in{D}:f^3(z)\in{U_{\delta}}\textrm{ but }f(z)\notin{U_{\delta}}, f^2(z)\notin{U_{\delta}}\},\\
&\vdots\\
F_m&=\{z\in{D}:f^m(z)\in{U_{\delta}}\textrm{ but }f^j(z)\notin{U_{\delta}}\textrm{ for }j\leq{m-1}\}=\\
&=f^{-m}(U_{\delta})\cap\bigcap_{j=1}^{m-1}f^{-j}\left(\Cd\setminus{U_{\delta}}\right)\cap{D}.
\endaligned\edm
Then $F=F_1\cup{F_2}\cup\ldots\cup{F_m}$ and the union is disjoint. Moreover, since $D$ is bounded, the~definition assures that for any $j=1,\ldots,m$, the~set $F_j$ contains no essential singularities of $f^j$ so the~spherical derivative of~$f^j$ is well defined everywhere in~$F_j$.
Notice also that
$$ D\setminus{F}=\{z\in{D}:f(z)\notin{U_{\delta}},\ldots,f^m(z)\notin{U_{\delta}}\}= \bigcap_{j=1}^{m}f^{-j}\left(\Cd\setminus{U_{\delta}}\right)\cap{D}. $$
Since $\Cd\setminus{U_{\delta}}$ is bounded, the~set $D\setminus{F}$ contains no poles of any $f^j$ for $j=1,\ldots,m$, hence also no essential singularity of~$f^m$.

To estimate the~degree of $f^m$ on $D\setminus{F}$ recall that $f$ is periodic with respect to an~appropriate lattice and on every period
parallelogram the degree of~$f$ equals~two. The~set $\Cd\setminus{U_{\delta}}$ is bounded in~$\C$, i.e. it is contained in $\Cd\setminus{B(\infty,\delta)}$, so it intersects finitely many, say $n_{\delta}$, period parallelograms. Hence the degree of~$f$ on $\Cd\setminus{U_{\delta}}$ is bounded by~$2n_{\delta}$. Now, every iterate of~$f$ that we consider maps a~subset of~$\Cd\setminus{U_{\delta}}$ back into $\Cd\setminus{U_{\delta}}$, thus the~degree of~$f^2$ is bounded by $(2n_{\delta})^2$  on the~set $f^{-1}\left(\Cd\setminus{U_{\delta}}\right)\cap(\Cd\setminus{U_{\delta}})$, etc. We conclude that the degree of $f^m$ on $D\setminus{F}$ is at most $(2n_{\delta})^m$ and this number depends only on~$f$, $m$ and $\delta$.

Moreover, on every $F_j$ the spherical derivative $|(f^j)'|$ is bounded from above by some constant $c_j=c_j(f,m,\delta)$. On the other hand on $D\setminus{F}$, $|(f^m)'|$ is bounded from below by a~constant $a=a(f,m,\delta)>0$ (there are neither poles nor essential singularities of $f^m$ and we are far away from $\Crit(f^m)$). We get the~following estimates.
\beq\label{1} \mu(U_{\delta})\leq\sum_{j=1}^{m}\int\limits_{F_j}|(f^j)'(z)|^2d\mu(z)\leq \sum_{j=1}^{m}c_j^2\,\mu(F_j)\leq \max_{j=1,\ldots,m}c_j^2\;\sum_{j=1}^{m}\mu(F_j)=: C_1\mu(F), \eeq
Denote $g(w)=\{z\in{D\setminus{F}}:f^m(z)=w\}$ for $w\in\Cd\setminus{U_{\delta}}$. Then:
\begin{equation}\label{2}\aligned
 \mu(D\setminus{F})=\int\limits_{\Cd\setminus{U_{\delta}}}\sum_{z\in{g(w)}}|(f^m)'(z)|^{-2}d\mu(w)\leq (2n_{\delta})^ma^{-2}\mu\left(\Cd\setminus{U_{\delta}}\right) =:\kappa\;\mu\left(\Cd\setminus{U_{\delta}}\right). 
\endaligned\end{equation}
Finally, for some constant $M_{\delta}$, depending only on~$\delta$, we have that
\begin{equation}\label{3} 
	\mu(U_{\delta})\geq M_{\delta}\;\mu\left(\Cd\setminus{U_{\delta}}\right). 
\end{equation}
Putting together (\ref{1}), (\ref{2}) and (\ref{3}) we obtain the following
$$ \mu(F)\geq \frac{1}{C_1}\;\mu(U_{\delta})\geq \frac{M_{\delta}}{C_1}\mu(\Cd\setminus{U_{\delta}})\geq \frac{M_{\delta}}{C_1\kappa}\;\mu(D\setminus{F}), $$
which implies that 
$$ \mu(F)\geq C\mu(D) $$ 
for some constant $C=C(f,m,\delta)$.
\end{proof}

\subsection{Conclusion}
 
To conclude with the~proof of Theorem~\ref{B}, recall that $f_{\lambda_0}$ was a~Weierstrass elliptic function from $\W_t$ or $\W_s$ with $\lambda_0\in\M$ and consider nearby maps $f_{\lambda}$, $\lambda\in{B(\lambda_0,r)}$ for some small $r>0$. Take an~arbitrarily small $\delta>0$ (such that e.g. $\lambda_0\in\M_{\delta}$). We want to show that the set $\M_{\delta}$ has the Lebesgue density less than one at~$\lambda_0$. 

We will assume that $r>0$ is so small that critical points of $f_{\lambda}$, $\lambda\in{B(\lambda_0,r)}$, are $\delta/4$ close to appropriate critical points of~$f_{\lambda_0}$ -- it is possible since critical points depend analytically on~$\lambda$ and we have only finitely many periodic families of critical points for Weierstrass elliptic functions. Then we have that
\begin{equation}\label{udeltainclusion}
	\forall\;\lambda\in{B(\lambda_0,r)}\qquad U_{3\delta/4}\subset B\big(\Crit(f_{\lambda}),\delta\big)\cup B(\infty,\delta),
\end{equation}
where $U_{\delta}$ is given by~$(\ref{udelta})$. In what follows we will estimate the~Lebesgue measure of the~set of parameters~$\lambda$ for which some iterate of a~critical value~$e_{\lambda}$ falls into~$U_{3\delta/4}$, hence $\lambda\notin\M_{\delta}$.

Let $\delta'>0$ and $r>0$ be chosen so that the statement of Proposition~\ref{elipdistprop} is satisfied and all our expansion and distortion properties hold. Consider a~parameter ball $B=B(\lambda_0,r_2)$ for any $r_2\leq{r}$ and let $n$ be the~largest integer for which the~set $D:=\xi_n(B)$ has the~diameter at most $\delta'$. Let the~discs $D_1\subset{D}\subset{D_2}$ are as in Proposition~\ref{elipdistprop}.

Lemma~\ref{freeperiod} implies that there exists an~$N>0$ such that $f_{\lambda_0}^m(D_1)\Supset{U_{\delta/2}}$ for some $m\leq{N}$ independently of the~center of~$D_1$. Because of the~inclusion $D_1\subset{D}\subset{D_2}$ and since $\diam(D_2)/\diam(D_1)=4M'$ we get by Lemma~\ref{measest} that
\begin{equation}\label{measest1} 
	\mu\left(\left\{z\in{D}:f_{\lambda_0}^m(z)\in{U}_{\delta/2}\right\}\right) \geq C_1\mu(D) 
\end{equation}
for some constant $C_1$ depending only on the~family $f_{\lambda}$, the~set $U_{\delta}$ and~$N$. 
Since we have only finitely many steps to consider we can decrease, if necessary, the~radius $r>0$ so that for every $\lambda\in{B(\lambda_0,r)}$,
$$	f_{\lambda_0}^m(\xi_n(\lambda))\in{U}_{\delta/2} \;\Longrightarrow\; \xi_{n+m}(\lambda)=f_{\lambda}^m(\xi_n(\lambda))\in{U}_{3\delta/4} $$
for any $m\leq{N}$.

\begin{lemma}\label{claim} 
It is possible to choose $\delta''\in(0,\delta')$ so that for every radius $0<r_2<r$ and all $\lambda\in{B(\lambda_0,r_2)}$,
$$ \xi_{n+j}(\lambda)\in{U_{3\delta/4}} \;\textrm{ for some }\, j\leq{N} \;\Longrightarrow\; \lambda\in{A(\lambda_0;r_1,r_2)}, $$
where $r_1>0$ is minimal for which $|\xi_n(\lambda)-\mu_n(\lambda)|\geq\delta''$ for all $\lambda\in{A(\lambda_0;r_1,r_2)}$. 
\end{lemma}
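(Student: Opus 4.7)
The plan is to prove the contrapositive: for every $\lambda$ with $|\lambda-\lambda_0|\le r_1$ and every $j\le N$, we show $\xi_{n+j}(\lambda)\notin U_{3\delta/4}$. Geometrically, such $\lambda$ send $\xi_n(\lambda)$ close to the hyperbolic set $\HS_\lambda$, at most $N$ further iterations keep us close to $\HS_\lambda$, and $\HS_\lambda$ is safely separated from $U_{3\delta/4}$.

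The starting geometric input is the inclusion $(\ref{udeltainclusion})$ combined with the property $\NN\cap(B(\Crit(f_\lambda),\delta)\cup B(\infty,\delta))=\emptyset$ from the opening of Section~3.2. Together these give $\NN\cap U_{3\delta/4}=\emptyset$. Moreover $\NN$ contains the $10\delta'$-spherical-neighbourhood of every $\HS_\lambda$, $\lambda\in B(\lambda_0,r)$, and the spherical derivative satisfies $|f_\lambda'|\le M'$ on $\NN$ (the same $M'$ used in the proof of Lemma~\ref{annulus}, finite because $\NN$ avoids poles and essential singularities). The claim therefore reduces to showing that $\xi_{n+j}(\lambda)\in\NN$ for all $j\le N$ whenever $|\lambda-\lambda_0|\le r_1$.

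I would first bound $|\xi_n(\lambda)-\mu_n(\lambda)|$ by a fixed multiple of $\delta''$ uniformly on the inner disc. The shape argument at the end of the proof of Lemma~\ref{annulus} shows that $\xi_n$ maps every circle $\{|\lambda-\lambda_0|=\rho\}$, $\rho\le r_1$, to an approximately round curve around $\mu_n(\lambda_0)$ of radius essentially $|(f_\lambda^n)'(e_\lambda)x(\lambda)|$, with $K$-fold covering and uniformly small distortion. Combined with Lemma~\ref{mdl}, the definition of $r_1$, and the uniform-in-$n$ smallness of $|\mu_n(\lambda)-\mu_n(\lambda_0)|=|h_\lambda(f_{\lambda_0}^n(e_{\lambda_0}))-f_{\lambda_0}^n(e_{\lambda_0})|$ (obtained by shrinking $r$, using that $\HS$ is compact and $h$ is a holomorphic motion), this yields $|\xi_n(\lambda)-\mu_n(\lambda)|\le 2\delta''$ for every $|\lambda-\lambda_0|\le r_1$. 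Iterating, since $\mu_{n+i}(\lambda)\in\HS_\lambda\subset\NN$ and $|f_\lambda'|\le M'$ on $\NN$, the mean value inequality gives inductively $|\xi_{n+j}(\lambda)-\mu_{n+j}(\lambda)|\le 2(M')^j\delta''$ as long as all previous iterates remained in $\NN$. Choosing $\delta''\in(0,\delta')$ small enough that $2(M')^N\delta''\le 10\delta'$ (the constants $M'$ and $N$ depend only on $f_{\lambda_0}$ and $\delta$, not on $r_2$ or $n$) makes the induction self-sustaining: each $\xi_{n+j}(\lambda)$ sits within $10\delta'$ of $\HS_\lambda$, hence lies in $\NN$, hence outside $U_{3\delta/4}$. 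This is exactly the contrapositive of the desired implication.

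The step I expect to be the main technical obstacle is upgrading the shape analysis of Lemma~\ref{annulus}, which is phrased for the annulus $A$, to a uniform control on the full inner disc $\{|\lambda-\lambda_0|\le r_1\}$ with constants independent of $n$. All the ingredients---approximate circularity of $\xi_n(\{|\lambda-\lambda_0|=\rho\})$, uniform closeness of $\mu_n(\lambda)$ to $\mu_n(\lambda_0)$ along the holomorphic motion, and the main distortion estimate of Lemma~\ref{mdl}---are already present in the proof of that lemma, but extracting a clean bound that is valid for every sub-circle and then passing to the supremum over $\rho\le r_1$ requires care; the uniformity in $n$ is crucial, since $n$ depends on the unknown $r_2$ and can be arbitrarily large.
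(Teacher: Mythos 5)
Your proposal is correct and follows essentially the same route as the paper: at most $N$ further iterates are controlled by the bound $M'$ (the paper's $b=\max\{|f_{\lambda}'(z)|:z\in\NN\}$), $\delta''$ is chosen so that $(M')^N\delta''$ is below a fixed multiple of $\delta'$, and then each $\xi_{n+j}(\lambda)$ stays in a small neighbourhood of $\mu_{n+j}(\lambda)\in\HS_{\lambda}\subset\NN$, hence outside $U_{3\delta/4}$. The only real difference is that you explicitly supply the bridging step — using (\ref{pom}), Lemma~\ref{mdl} and the form (\ref{xform}) of $x$ to show $|\xi_n(\lambda)-\mu_n(\lambda)|\leq 2\delta''$ on the whole inner disc $\{|\lambda-\lambda_0|\leq r_1\}$ — which the paper leaves implicit by stating its conclusion only for parameters with $|\xi_n(\lambda)-\mu_n(\lambda)|\leq\delta''$; this is a careful completion of the same argument, not a different one.
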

\begin{proof}
We can choose $\delta''>0$ as small as desired provided $r>0$ is small enough. Thus, to have that for any $\lambda\in{B(\lambda_0,r)}$ with $|\xi_n(\lambda)-\mu_n(\lambda)|\leq\delta''$ and for all $j\leq{N}$,
$$  |\xi_{n+j}(\lambda)-\mu_{n+j}(\lambda)|\leq b^j|\xi_n(\lambda)-\mu_n(\lambda)|\leq \delta'  $$
it is sufficient to choose $\delta''$ so small that $b^{N}\leq\frac{\delta'}{\delta''}$, where 
$$  b=\max\{|f_{\lambda}'(z)|:z\in\NN, \lambda\in{B(\lambda_0,r)}\}\,, \;1<b<\infty.  $$
Next, we know that $\mu_{n+j}(\lambda)\in\HS_{\lambda}\subset\NN$ (if $r$ is small) and $\NN\cap{U_{\delta}}=\emptyset$. Therefore, if $\delta'<\delta/4$, then $\xi_{n+j}(\lambda)\notin{U_{3\delta/4}}$ for all $\lambda$ satisfying $|\xi_n(\lambda)-\mu_n(\lambda)|\leq\delta''$.
\end{proof}
We get the~following inclusions
\begin{equation}\label{inclusions} 
	A(\lambda_0;r_1,r_2) \supset \left\{\lambda\in{B}:\xi_{n+m}(\lambda)\in{U_{3\delta/4}}\right\} \supset \xi_n^{-1}\left(\left\{z\in{D}:f_{\lambda_0}^m(z)\in{U}_{\delta/2}\right\}\right). 
\end{equation}

Recall that inside the~annulus $A=A(\lambda_0;r_1,r_2)$ we have bounded distortion of $\xi_n$:
$$ \frac{1}{C'}\left(\frac{r_1}{r_2}\right)^{K-1}\leq \left|\frac{\xi_n'(\lambda_1)}{\xi_n'(\lambda_2)}\right|\leq C'\left(\frac{r_2}{r_1}\right)^{K-1}. $$
Moreover, if $r>0$ was chosen small enough and we take any two parameters $\lambda_i$ with $|\lambda_i-\lambda_0|=r_i$, $i=1,2$, then since $\diam(\xi_n(B))\leq\delta'$,
$$ |\xi_n(\lambda_2)-\mu_n(\lambda_2)|\leq\frac{1}{1-\varepsilon}\delta', $$
and by the~choice of~$r_1$
\bdm |\xi_n(\lambda_1)-\mu_n(\lambda_1)|\geq\delta''.  \edm
Consequently, applying Lemma~\ref{mdl} and (\ref{pom}), we get similarly like in the proof of Lemma~\ref{annulus},
\bdm\aligned
 \frac{\delta''}{\delta'}\leq& \frac{1}{1-\varepsilon}\left|\frac{\xi_n(\lambda_1)-\mu_n(\lambda_1)}{\xi_n(\lambda_2)-\mu_n(\lambda_2)}\right|\leq \frac{1+\varepsilon}{(1-\varepsilon)^2}\left|\frac{(f_{\lambda_1}^n)'(e_{\lambda_1})x(\lambda_1)}{(f_{\lambda_2}^n)'(e_{\lambda_2})x(\lambda_2)}\right| \leq\\ \leq&\frac{(1+\varepsilon)^2}{(1-\varepsilon)^2}\left|\frac{(f_{\lambda_2}^n)'(e_{\lambda_2})x(\lambda_1)}{(f_{\lambda_2}^n)'(e_{\lambda_2})x(\lambda_2)}\right|= \frac{(1+\varepsilon)^2}{(1-\varepsilon)^2}\left|\frac{x(\lambda_1)}{x(\lambda_2)}\right|\leq \frac{(1+\varepsilon)^3}{(1-\varepsilon)^3}\left(\frac{r_1}{r_2}\right)^K,
\endaligned\edm
and therefore
$$ \left(\frac{r_1}{r_2}\right)^K\geq \left(\frac{1-\varepsilon}{1+\varepsilon}\right)^3 \frac{\delta''}{\delta'}. $$

As a consequence we obtain uniform bounds on the distortion of $\xi_n$ on the annulus $A$:
\beq\label{bdist} \tilde{C}^{-1}\leq \left|\frac{\xi_n'(\lambda_1)}{\xi_n'(\lambda_2)}\right|\leq \tilde{C} \eeq
for all $\lambda_1,\lambda_2\in{A}$, where $\tilde{C}$ depends only on $\delta''$ and $\delta'$.

\medskip
In order to estimate the~Lebesgue measure of the~set $\{\lambda\in{B(\lambda_0,r_2):\xi_{n+m}}\in{U}_{3\delta/4}\}$ for any radius $0<r_2\leq{r}$ and appropriate~$m\leq{N}$ let us denote
$$  E=\{z\in{D}:f_{\lambda_0}^m(z)\in{U}_{\delta/2}\}  $$
and fix an~arbitrary point $z_0\in{A}$. By (\ref{inclusions}) we have $\xi_n^{-1}(E)\subset{A}$ and hence by (\ref{bdist})
$$ \mu(E)\leq\int_{\xi_n^{-1}(E)}|\xi_n'(z)|^2d\mu(z)\leq \tilde{C}^{2}|\xi_n'(z_0)|^2\mu(\xi_n^{-1}(E)). $$

On the~other hand, since the degree of $\xi_N$ is bounded by $K$ on~$A$,
\bdm \mu(A)=\int\limits_{D}\sum_{z\in\xi_n^{-1}(w)\cap{A}}|\xi_n'(z)|^{-2}d\mu(w)\leq \tilde{C}^2K|\xi_n'(z_0)|^{-2}\mu(D). \edm
Therefore, by (\ref{measest1}) and since $r_1/r_2\leq0.1$ (see Lemma~\ref{annulus}), we get the~following inequalities
$$\aligned 
\mu(\xi_n^{-1}(E))\geq& \tilde{C}^{-2}|\xi_n'(z_0)|^{-2}\mu(E)\geq \tilde{C}^{-2}|\xi_n'(z_0)|^{-2}C\mu(D)\geq\\ 
\geq& \frac{C\tilde{C}^{-4}}{K}\mu(A)\geq \frac{C\tilde{C}^{-4}}{K}\frac{99}{100}\mu(B). 
\endaligned$$

Thus for some $q\in(0,1)$, $q=q(\delta',\delta'',\delta)$, we have that
\bdm \mu\left(\xi_n^{-1}(E)\right)\geq q\mu(B). \edm
By (\ref{inclusions}) this implies that
\bdm \mu\left(\{\lambda\in{B}:\xi_j(\lambda)\in{U_{3\delta/4}} \textrm{ for some } j\geq{n}\}\right)\geq q\mu(B). \edm
By (\ref{udeltainclusion}) if the~critical value $e_{\lambda}$ falls under $f_{\lambda}$ to $U_{3\delta/4}$, then the parameter $\lambda$ cannot be in $\M_{\delta}$, so
\bdm \mu\left(\left\{\lambda\in{B(\lambda_0,r_2)}: \lambda\notin\M_{\delta}\right\}\right)\geq q\mu(B(\lambda_0,r_2)).  \edm
Since it holds for an~arbitrary small $r_2\leq{r}$, the~Lebesgue density of the~set $\M_{\delta}$ at~$\lambda_0$ is at most $1-q<1$. This finishes the proof of Theorem~\ref{B}.

\section{Proof of the~Lemma~\ref{critpole}}

To finish the~proof of Theorem~\ref{main} we need to deal with the~case when all critical values are prepoles. Recall first that every Weierstrass elliptic function has a~countable family of poles, which are exactly lattice points. Poles of~$f_{\lambda}$ are given by
$$ p_{j,k}(\lambda)=j\lambda+k\e^{2\pi{i}/3}\lambda, \quad j,k\in\Z $$
for $f_{\lambda}\in\W_t$ and by
$$ p_{j,k}(\lambda)=j\lambda+ki\lambda, \quad j,k\in\Z $$
for $f\in\W_s$. These are obviously analytic functions of~$\lambda$.

Suppose now that $\lambda_0$ is a~parameter for which all critical values of $f_{\lambda_0}\in\W_t\cup\W_s$ are prepoles, i.e.
\begin{equation}\label{critpoleeq}
	f_{\lambda_0}^n(e_{\lambda_0})=p_{j,k}(\lambda_0)
\end{equation}
for some $n\geq0$. In case of a~triangle lattice $e_{\lambda_0}$ is any of the~three critical values (then for remaining critical values we have analogous equations multiplied by $\e^{2\pi{i}/3}$ and $\e^{4\pi{i}/3}$ respectively) while for a~square lattice we take $e_{\lambda_0}\neq0$.

Consider the~following function
$$ g(\lambda)=f_{\lambda}^n(e_{\lambda})-p_{j,k}(\lambda) $$
in a~neighbourhood of~$\lambda_0$, where numbers $j,k\in\Z$ and $n\in\N$ are fixed. It is a~holomorphic function of~$\lambda$ for $\lambda$ close to~$\lambda_0$ and by~(\ref{critpoleeq}) we have $g(\lambda_0)=0$. We have two cases: either $g$ is an~open map and $\lambda_0$ is its isolated root or $g(\lambda)\equiv0$ locally.

If the~second condition holds, for all parameters $\lambda$ close to~$\lambda_0$ the~dynamics of critical values is the~same. To be precise, all critical values of~$f_{\lambda}$ are mapped onto fixed poles after fixed number of iterates. We can argue exactly like in the~proof of transversality condition (Lemma~\ref{transver}) -- parameter $\lambda_0$ is postsingularly stable and we can find a~conjugacy between $f_{\lambda}$ and $f_{\lambda_0}$ defined on branches of consecutive preimages of critical values. The~conjugacy may be extended to a~quasiconformal map on the~Julia set $J(f_{\lambda_0})$ conjugating $f_{\lambda_0}$ with~$f_{\lambda}$ for all $\lambda$ close to~$\lambda_0$. There exists, therefore, on $J(f_{\lambda_0})$ an~$f_{\lambda_0}$-invariant line-field contrary to {\cite[Theorem~1.1]{rvs}} (cf. {\cite[Theorem~2]{gks}}). This case cannot happen.

It implies that $g$ is not constant and hence $\lambda_0$ is its isolated root. Consequently, theres is no $\lambda$ close to~$\lambda_0$ for which critical values of~$f_{\lambda}$ are eventually mapped onto these poles after $n$ iterates (in the~case of a~square lattice this does not concern $0$ which is always a~pole), hence the~set of parameters satisfying (\ref{critpoleeq}) is discrete. Since there are only countably many such equations, we conclude that the~set of parameters~$\lambda$ for which all critical values of~$f_{\lambda}$ are prepoles is countable. This finishes the~proof of~Lemma~\ref{critpole}.

Notice that this does not prove that the~whole set of parameters for which all critical values are prepoles is discrete. Moreover, results of Jane Hawkins and her collaborates show that these parameters accumulate similarly to a~family of consecutive prepoles of a~meromorphic function. Still, they form a~countable set whose Lebesgue measure in~$\C$ equals zero.

\bigskip
\textbf{Acknowledgments}
The author would like to thank Jane Hawkins for helpful and encouraging discussions concerning elliptic functions.

\end{document}